\documentclass[10pt,leqno]{amsart}
\theoremstyle{definition}
\topmargin= .5cm
\textheight= 22.5cm
\textwidth= 32cc
\baselineskip=16pt
\usepackage{indentfirst, amssymb,amsmath,amsthm}
\usepackage{csquotes}
\usepackage{hyperref}
\evensidemargin= .9cm
\oddsidemargin= .9cm
\newtheorem*{theoA}{Theorem A}
\newtheorem*{theoB}{Theorem B}
\newtheorem*{theoC}{Theorem C}
\newtheorem*{theoD}{Theorem D}
\newtheorem*{theoE}{Theorem E}
\newtheorem*{theoF}{Theorem F}

\newtheorem{theo}{Theorem}[section]
\newtheorem{lem}{Lemma}[section]
\newtheorem{cor}{Corollary}[section]

\newtheorem{exm}{Example}[section]
\newtheorem{defi}{Definition}[section]
\newtheorem{rem}{Remark}[section]

\newcommand{\ol}{\overline}
\newcommand{\be}{\begin{equation}}
\newcommand{\ee}{\end{equation}}
\newcommand{\beas}{\begin{eqnarray*}}
\newcommand{\eeas}{\end{eqnarray*}}
\newcommand{\bea}{\begin{eqnarray}}
\newcommand{\eea}{\end{eqnarray}}

\numberwithin{equation}{section}
\AtBeginDocument{{\noindent\small Mathematica Moravica, Vol. 20-2 (2016), 1-14\\
DOI: 10.5937/MatMor1602001A}}
\begin{document}
\title[Uniqueness of the power of a meromorphic functions]{Uniqueness of the power of a meromorphic functions with its differential polynomial\\ sharing a set}
\date{}
\author[A. Banerjee and B. Chakraborty]{ Abhijit Banerjee  and Bikash Chakraborty. }
\date{}
\address{Department of Mathematics, University of Kalyani, West Bengal 741235, India.}
\email{abanerjee\_kal@yahoo.co.in, abanerjee\_kal@rediffmail.com
}
\email{bikashchakraborty.math@yahoo.com, bchakraborty@klyuniv.ac.in}
\maketitle
\let\thefootnote\relax
\footnotetext{2010 Mathematics Subject Classification: 30D35.}
\footnotetext{Key words and phrases:Meromorphic function, Differential Polynomial, Set Sharing, Uniqueness.}
\footnotetext{Type set by \AmS -\LaTeX}
\setcounter{footnote}{0}
\begin{abstract}
This paper is devoted to the uniqueness problem of the power of a meromorphic function with its differential polynomial sharing a set. Our result will extend a number of results obtained in the theory of normal families. Some questions are posed for future research.
\end{abstract}
\section{Introduction Definitions and Results}
In this paper we assume that readers are familiar with the basic Nevanlinna Theory (\cite{5}). Let  $f$ and $g$ be two  non constant meromorphic functions in the complex plane $\mathbb{C}$. If for some $a\in\mathbb{C}\cup\{\infty\}$, $f$ and $g$ have same set of $a$-points with the same multiplicities, we say that $f$ and $g$ share the value $a$ CM (counting multiplicities) and if we do not consider the multiplicities then $f$, $g$ are said to share the value $a$ IM (ignoring multiplicities).\par
When $a=\infty$ the zeros of $f-a$ means the poles of $f$.\par
The problem of meromorphic functions sharing values with their derivatives is a special subclass in the literature of uniqueness theory.
The subject of sharing values between entire functions and their derivatives was first studied by Rubel and Yang (\cite{9}).
In 1977, they proved that if a non-constant entire function $f$ and $f^{'}$ share two distinct finite numbers $a$, $b$ CM, then $f = f^{'}$.\par
In 1979, analogous result for IM sharing was obtained by Mues and Steinmetz in the following manner.
\begin{theoA}(\cite{8}) Let $f$ be a non-constant entire function. If $f$ and $f^{'}$ share two distinct values $a$, $b$ IM then $f^{'}\equiv f$.\end{theoA}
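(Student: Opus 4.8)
The plan is to reduce the IM hypothesis to CM sharing and then invoke the Rubel--Yang theorem quoted above, which already gives $f'\equiv f$ in the CM case. First I would record the reductions available because $f$ is entire: $N(r,f)=0$, $T(r,f')\le T(r,f)+S(r,f)$ and $S(r,f')=S(r,f)$. Since $f$ and $f'$ share $a$ and $b$ IM, the reduced counting functions coincide, $\overline{N}(r,a;f)=\overline{N}(r,a;f')$ and $\overline{N}(r,b;f)=\overline{N}(r,b;f')$. A useful preliminary observation is that every $a$-point of $f$ is \emph{simple} whenever $a\neq 0$: if $f-a$ had a zero of order $\ge 2$ at $z_0$ then $f'(z_0)=0$, while the sharing forces $f'(z_0)=a$, whence $a=0$. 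Since $a\neq b$, at most one of the two values is $0$; I carry out the main line of argument assuming $a,b\neq 0$ (so that $f-a$ and $f-b$ have only simple zeros) and indicate the analogous modification for a zero value separately. At each such point $f=f'$, so $f'-f$ vanishes at every $a$-point and every $b$-point of $f$.

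Next I would extract the governing inequality from the Second Fundamental Theorem applied to $f$ with the values $a,b,\infty$. Since $\overline{N}(r,\infty;f)=0$, this reads $T(r,f)\le \overline{N}(r,a;f)+\overline{N}(r,b;f)+S(r,f)$. On the other hand $f'-f$ is entire with $T(r,f'-f)\le T(r,f)+S(r,f)$, and it vanishes at the (disjoint, simple) $a$- and $b$-points, so $\overline{N}(r,a;f)+\overline{N}(r,b;f)\le N(r,0;f'-f)\le T(r,f)+S(r,f)$. These two estimates pinch all the quantities together up to $S(r,f)$; in particular $\overline{N}(r,a;f)+\overline{N}(r,b;f)=T(r,f)+S(r,f)$, so the shared points already carry essentially the full characteristic of $f$.

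The heart of the argument is to upgrade IM to CM, that is, to show that $f$ and $f'$ take $a,b$ with equal multiplicities outside a set counted by $S(r,f)$. Here I would normalise by $F=\frac{f-a}{b-a}$ and $G=\frac{f'-a}{b-a}$, so that $F$ and $G$ share $0$ and $1$ IM, and use the standard auxiliary function $H=\frac{F''}{F'}-\frac{2F'}{F-1}-\frac{G''}{G'}+\frac{2G'}{G-1}$. Being a combination of logarithmic derivatives, $H$ satisfies $m(r,H)=S(r,f)$, while its poles are simple and occur only at the common value points where the multiplicities of $f$ and $f'$ differ. The crux---and what I expect to be the main obstacle---is to show that the counting function of these multiplicity-discrepancy points is itself $S(r,f)$: this is precisely the content that separates the delicate IM case from the CM case, and I would establish it by feeding the pinching of the previous paragraph into the pole/zero bookkeeping of $H$ together with the Second Fundamental Theorem. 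Once this is in hand, $T(r,H)=S(r,f)$, the multiplicities match off an $S(r,f)$-set, and $f$ and $f'$ share $a$ and $b$ CM.

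Finally, once CM sharing is established, the quotients $\frac{f'-a}{f-a}$ and $\frac{f'-b}{f-b}$ are zero-free entire functions, say $e^{\alpha}$ and $e^{\beta}$, and the Rubel--Yang theorem quoted above applies verbatim to give $f'\equiv f$. Alternatively, eliminating $f'$ between $f'-a=e^{\alpha}(f-a)$ and $f'-b=e^{\beta}(f-b)$ yields $(e^{\alpha}-e^{\beta})f=ae^{\alpha}-be^{\beta}+(b-a)$; a short growth comparison (this is exactly Rubel--Yang's CM analysis) forces $e^{\alpha}\equiv e^{\beta}\equiv 1$, hence $f'-a=f-a$, i.e.\ $f'\equiv f$. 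Equivalently one may note that CM sharing makes $W=\frac{(f'-a)(f-b)}{(f'-b)(f-a)}$ zero-free, and the relation $(f'-a)(f-b)=C(f'-b)(f-a)$ reduces for $C=1$ to $(a-b)(f'-f)=0$, while $C\neq 1$ would force $f$ to omit the finite value $\frac{Ca-b}{C-1}$, so that $f=v_0+e^{g}$, whose back-substitution contradicts the balance of Nevanlinna growth. In all cases $f'\equiv f$, as claimed.
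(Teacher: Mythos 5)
The paper itself offers no proof of Theorem A: it is quoted verbatim from Mues and Steinmetz \cite{8} as background motivation, so there is no in-paper argument to measure yours against, and your proposal must stand on its own. On its own it does not close. Your preliminary reductions are fine --- for $a,b\neq 0$ every $a$-point and $b$-point of $f$ is simple, $f'-f$ vanishes there, and the Second Fundamental Theorem pinches $\overline{N}(r,a;f)+\overline{N}(r,b;f)=T(r,f)+S(r,f)=N(r,0;f'-f)+S(r,f)$. But the step you yourself label ``the crux'' and ``the main obstacle'' --- showing that the counting function of the points where the multiplicities of $f$ and $f'$ disagree is $S(r,f)$ --- is exactly the content of the Mues--Steinmetz theorem beyond Rubel--Yang, and you do not supply it. Saying you would obtain it ``by feeding the pinching into the pole/zero bookkeeping of $H$'' is not an argument: the standard $H$-function machinery for two shared values between arbitrary $F$ and $G$ does not yield $N(r,H)=S(r,f)$ without further input (for two generic meromorphic functions, two IM-shared values imply nothing of the sort), and you never exhibit the specific cancellations coming from $G$ being the derivative of $F$ that would make it work. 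The published proof does something genuinely different: it studies auxiliary functions such as $\frac{(f'-a)(f'-b)}{(f-a)(f-b)}$ and $\frac{f''(f-f')}{(f'-a)(f'-b)}$, which are shown to be entire with small characteristic, and derives $f\equiv f'$ from the resulting algebraic identities --- it does not pass through CM sharing at all.

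Two further points would need repair even if the crux were granted. First, ``multiplicities match off a set counted by $S(r,f)$'' is not CM sharing; with only almost-CM sharing the quotients $\frac{f'-a}{f-a}$ and $\frac{f'-b}{f-b}$ are not zero- and pole-free, so they are not of the form $e^{\alpha}$ and Rubel--Yang does not apply ``verbatim'' --- you would have to rerun their growth comparison with extra small counting functions and check it survives. Second, the case where one of $a,b$ equals $0$ is announced as ``an analogous modification'' but never carried out; there the $0$-points of $f$ need not be simple and $f'-f$ need not vanish at them with the stated control, so the pinching inequality itself has to be re-derived. As it stands the proposal is a plausible outline with its decisive step missing.
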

To proceed further we consider the following well known definition of set sharing. \par
Let $S$ be a set of complex numbers and $E_{f}(S)=\bigcup_{a\in S}\{z: f(z)=a\}$, where each zero is counted according to its multiplicity. If we do not count the multiplicity, then the set $ \bigcup_{a\in S}\{z: f(z)=a\}$ is denoted by $\ol E_{f}(S)$.\par
If $E_{f}(S)=E_{g}(S)$ we say that $f$ and $g$ share the set $S$ CM. On the other hand, if $\ol E_{f}(S)=\ol E_{g}(S)$, we say that $f$ and $g$ share the set $S$ IM. Evidently, if $S$ contains only one element, then it coincides with the usual definition of CM (respectively, IM) sharing of values.\par
In view of the above definition it will be interesting to study the relation between $f$ and its derivative $f^{'}$ when they share a set.
We see from the following example that results of Rubel-Yang or Mues-Steinmetz are not in general true when we consider the sharing of a set of two elements instead of values.
\begin{exm} Let $S=\{a,b\}$, where $a$ and $b$ are any two distinct complex numbers. Let $f(z)=e^{-z}+a+b$, then $E_{f}(S)=E_{f'}(S)$ but $f\not\equiv f'$.\end{exm}
Thus for the uniqueness of meromorphic function with its derivative counterpart, the cardinality of the sharing set should at least be three. In this direction, in 2003, using Normal families, Fang and Zalcman made the first breakthrough by establishing the following result.
\begin{theoB}(\cite{3}) Let $S=\{0,a,b\}$, where $a,b$ are two non-zero distinct complex numbers satisfying $a^{2} \not= b^{2}$, $a\not = 2b$, $a^{2}-ab+b^{2}\not=0$. If for a non constant entire function $f$,  $E_{f}(S)=E_{f'}(S)$, then $f\equiv f'$.
\end{theoB}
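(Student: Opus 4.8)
The plan is to first translate the set-sharing hypothesis into an analytic identity, then control the growth of the resulting exponential factor, and finally run an algebraic case analysis in which the three numerical conditions on $a,b$ do the decisive work. Write $P(w)=w(w-a)(w-b)$, so that the zeros of the entire function $P(f)$ are exactly the preimages under $f$ of the set $S$, counted with multiplicity, and likewise for $P(f')$. Since $0,a,b$ are distinct, $E_{f}(S)=E_{f'}(S)$ is equivalent to the statement that the entire functions $P(f)$ and $P(f')$ have identical zeros with identical multiplicities. Their quotient is therefore a zero-free entire function, so
\[
\frac{f'(f'-a)(f'-b)}{f(f-a)(f-b)}=e^{h}
\]
for some entire function $h$, and the whole problem reduces to proving that $h$ is constant and that this constant can only be $0$, i.e. $f'\equiv f$.

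To control $h$, I would pass to logarithmic derivatives: since $(P(f))'/P(f)$ and $(P(f'))'/P(f')$ are logarithmic derivatives, the lemma on the logarithmic derivative gives $m(r,h')=S(r,f)$, and as $h'$ is entire this yields $T(r,h')=S(r,f)$. In parallel, applying the Second Main Theorem to both $f$ and $f'$ against the three distinct targets $0,a,b$, together with the equality $\ol N(r,1/P(f))=\ol N(r,1/P(f'))$ forced by the sharing, shows that almost all of the relevant preimages are simple and that $T(r,f')=T(r,f)+S(r,f)$. The delicate point is to upgrade ``$h'$ is a small function'' to ``$h$ is constant'': when $f$ has finite order $h$ is a polynomial and one must still exclude $\deg h\ge 1$, while for $f$ of infinite order the clean value-distribution bookkeeping is not by itself conclusive. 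This is exactly where Zalcman's rescaling lemma enters: a putative counterexample of uncontrolled growth would force non-normality of a naturally associated family of rescalings of $f$, and the limit function extracted from Zalcman's lemma would violate the sharing relation, so $h$ must in fact be constant.

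With $e^{h}\equiv c$ a nonzero constant, the relation becomes the purely algebraic identity $P(f')=c\,P(f)$. I would then factor the two-variable polynomial $P(X)-c\,P(Y)$ and examine each branch as a functional relation between $f'$ and $f$. The identity branch $f'=f$ (which occurs when $c=1$) is the desired conclusion; every other branch forces a nontrivial polynomial relation $Q(f,f')=0$ between $f$ and its derivative. Solving these — the relevant solutions being of exponential type, exactly as the two-element counterexample $f=e^{-z}+a+b$ already suggests — and feeding them back into $P(f')=c\,P(f)$ produces constraints that collapse to one of the forbidden equalities $a^{2}=b^{2}$, $a=2b$, or $a^{2}-ab+b^{2}=0$. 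Since all three are excluded by hypothesis, no non-identity branch survives and $f'\equiv f$ follows.

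I expect the growth/normality step of the second paragraph to be the main obstacle: reducing $h$ to a constant without an a priori finite-order assumption is where the substance of the argument lies, and it is precisely the reason the theorem was originally established through normal families rather than by elementary value-distribution estimates alone. Once $h$ is pinned to a constant, the remaining work is a finite, mechanical case check driven entirely by the three inequalities on $a$ and $b$.
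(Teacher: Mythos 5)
First, a point of orientation: Theorem B is quoted in this paper only as background, with a citation to Fang--Zalcman \cite{3}; the paper contains no proof of it, so there is nothing internal to compare your argument against. Judged on its own terms, your proposal has the right skeleton --- the reduction of $E_{f}(S)=E_{f'}(S)$ to the identity $P(f')=e^{h}P(f)$ with $P(w)=w(w-a)(w-b)$ is correct (both sides are entire, the divisors of zeros coincide with multiplicity, so the quotient is zero-free), and the estimate $T(r,h')=S(r,f)$ via the lemma on the logarithmic derivative is sound for entire $f$. But the proof has a genuine gap exactly where you yourself locate the ``main obstacle'': the passage from ``$h'$ is small'' to ``$h$ is constant.'' The sentence ``a putative counterexample would force non-normality of a naturally associated family of rescalings of $f$, and the limit function extracted from Zalcman's lemma would violate the sharing relation'' is a placeholder, not an argument. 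You do not specify the family (in the actual Fang--Zalcman proof it is the translates $\{f(z+w):w\in\mathbb{C}\}$, whose normality is established by a bespoke normality criterion tailored to the hypothesis $E_{f}(S)=E_{f'}(S)$, and which then yields that $f$ has bounded spherical derivative, hence is of exponential type, hence $h$ is a polynomial of degree at most one); you do not explain why the sharing condition passes to the Zalcman limit (this is delicate, since the rescaled limits relate $g$ and $g'$ only after a normalization that must be chosen compatibly with the hypothesis); and you do not carry out the elimination of $\deg h=1$. Since this is the entire analytic content of the theorem, the proof as written does not go through.

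The algebraic endgame is also shakier than you suggest. For a generic constant $c$ the polynomial $P(X)-cP(Y)$ is irreducible, so there are no ``branches'' giving functional relations between $f'$ and $f$; the identity $P(f')=cP(f)$ has to be exploited together with the finite-order information (so that $f'$ and $f$ are explicitly of the form $Ce^{\omega z}+\text{const}$), after which comparing coefficients of $P(\omega w+\cdots)$ with $cP(w)$ is what produces the excluded relations. For instance $f=Ce^{-z}$ satisfies $P(f')\equiv -P(f)$ precisely when $a+b=0$, which is why the hypothesis $a^{2}\neq b^{2}$ is needed; your outline would not detect this case because it never pins down the admissible forms of $f$. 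So the final ``finite, mechanical case check'' is only mechanical once the classification of finite-order solutions is in hand, and that classification is part of what must be proved.
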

In 2007 Chang, Fang and Zalcman(\cite{2.0}) further extended the above result by considering an arbitrary set having three elements in the following manner.
\begin{theoC}(\cite{2.0}) Let $f$ be a non-constant entire function and let $S=\{a,b,c\}$, where $a,b~and~c$ are distinct complex numbers. If $E_{f}(S)=E_{f'}(S)$, then either
\begin{enumerate}
\item $f(z)=Ce^{z}$; or
\item $f(z)=Ce^{-z}+\frac{2}{3}(a+b+c)$ and $(2a-b-c)(2b-c-a)(2c-a-b)=0$; or
\item $f(z)=Ce^{\frac{-1 \pm i\sqrt{3}}{2}z}+\frac{3 \pm i\sqrt{3}}{6}(a+b+c)$ and $a^2+b^2+c^2-ab-bc-ca=0$,\\
where $C$ is a non-zero constant.
\end{enumerate}
\end{theoC}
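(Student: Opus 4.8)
The plan is to encode the set–sharing hypothesis as a single functional relation and then to analyse it by Nevanlinna theory. Writing $P(w):=(w-a)(w-b)(w-c)$, the condition $E_{f}(S)=E_{f'}(S)$ says exactly that the entire functions $P(f)$ and $P(f')$ have the same zeros with the same multiplicities (here one uses that $f$, being entire, has no poles, so neither do $P(f)$, $P(f')$). Consequently their quotient is a zero–free entire function, and there is an entire function $\gamma$ with
\[ P(f')=e^{\gamma}\,P(f),\qquad\text{i.e.}\qquad (f'-a)(f'-b)(f'-c)=e^{\gamma}\,(f-a)(f-b)(f-c). \]
Everything will be read off from this identity.

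My first main step is to show that $\gamma$ is constant. Logarithmic differentiation of the identity gives
\[ \gamma'=\frac{\big(P(f')\big)'}{P(f')}-\frac{\big(P(f)\big)'}{P(f)}, \]
a difference of two logarithmic derivatives, so the lemma on the logarithmic derivative yields $m(r,\gamma')=S(r,f)$; since $\gamma'$ is entire this means $T(r,\gamma')=S(r,f)$, i.e. $\gamma'$ is a small function of $f$. The hard part of the whole theorem is to upgrade this to $\gamma'\equiv 0$: a priori $f$ could be of infinite order with $\gamma$ transcendental but slowly growing, and ruling this out is not formal. This is precisely the point at which Chang--Fang--Zalcman use normal families rather than value distribution: one rescales $f$ and applies Zalcman's lemma to the inherited sharing relation to produce a nonconstant entire limit of order at most one satisfying the analogous identity, which then forces $f$ itself to be of finite order and $\gamma$ to be a polynomial; a growth comparison (using $T(r,P(f'))=3T(r,f')+O(1)$ and $T(r,P(f))=3T(r,f)+O(1)$) then collapses $\gamma$ to a constant. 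I expect this order–control step to be the principal obstacle. Granting it, put $c_{0}:=e^{\gamma}$, so that $P(f')=c_{0}P(f)$; in particular $T(r,f')=T(r,f)+O(1)$.

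The second step extracts the explicit forms from $P(f')=c_{0}P(f)$. Comparing the coincident zero–divisors, every zero of $f-x$ with $x\in S$ is a zero of $P(f')$, so there $f'\in S$; analysing these common zeros together with the rigidity $T(r,f')=T(r,f)+O(1)$ shows that $f'$ is a single affine image of $f$, say $f'=\lambda(f-d)$, where the affine map $\tau:w\mapsto\lambda(w-d)$ permutes the multiset $\{a,b,c\}$ and $\lambda^{3}=c_{0}$. Solving the linear differential equation $f'=\lambda(f-d)$ gives $f=Ce^{\lambda z}+d$ with $C\neq 0$. Finally I classify $\tau$: being an affine permutation of a three-element set it has order $1$, $2$ or $3$, so its linear part $\lambda$ is a root of unity of that order. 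If $\tau=\mathrm{id}$ then $\lambda=1$, $d=0$ and $f=Ce^{z}$. If $\tau$ is an involution then $\lambda=-1$; an involution of three points fixes one of them, which forces the fixed point to be the centroid $\tfrac{1}{3}(a+b+c)$, giving $d=\tfrac{2}{3}(a+b+c)$ and $(2a-b-c)(2b-c-a)(2c-a-b)=0$. If $\tau$ is a $3$-cycle then $\lambda=\tfrac{-1\pm i\sqrt{3}}{2}$ is a primitive cube root of unity, the three points must be carried cyclically onto one another (an equilateral configuration), which is the condition $a^{2}+b^{2}+c^{2}-ab-bc-ca=0$, and the fixed point computation gives $d=\tfrac{3\pm i\sqrt{3}}{6}(a+b+c)$. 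These are exactly conclusions (1), (2) and (3).

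Besides the order–control step flagged above, the one remaining delicate point is the passage from the pointwise identity $P(f')=c_{0}P(f)$ to the global relation $f'=\lambda(f-d)$: a priori $f'(z)$ may be any of the three roots $w$ of $P(w)=c_{0}P(f(z))$, and one must check that a single branch is selected throughout $\mathbb{C}$. I would handle this by noting that these root–branches are locally holomorphic and distinct away from the isolated zeros of the discriminant, so by the connectedness of the plane the choice is globally constant, and then identify the selected branch as the affine function $\lambda(f-d)$ using the matching of the zeros of $f-x$ with those of $f'-\tau(x)$.
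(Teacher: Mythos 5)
First, a point of reference: the paper does not prove Theorem C at all --- it is quoted from Chang--Fang--Zalcman \cite{2.0} as background --- so there is no internal proof to compare against, and your attempt has to be judged on its own terms. Its overall shape is reasonable (encode the hypothesis as $P(f')=e^{\gamma}P(f)$ with $P(w)=(w-a)(w-b)(w-c)$, reduce to $f'=\lambda(f-d)$, classify the affine permutations of $\{a,b,c\}$; that final classification is carried out correctly and does reproduce conclusions (1)--(3)). But there are two genuine gaps, and the first is the actual content of the theorem. You explicitly defer the proof that $\gamma$ is constant to a normal-families argument and then proceed ``granting it.'' That deferred step is precisely the hard part of Chang--Fang--Zalcman's paper: knowing $T(r,\gamma')=S(r,f)$ does not by itself exclude an infinite-order $f$ with nonconstant $\gamma$, and the rescaling/Zalcman-lemma machinery that controls the order and reduces $\gamma$ to a constant is several pages of delicate work, not a routine appeal. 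A write-up that assumes this step has not proved the theorem.

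The second gap is the passage from $P(f')=c_{0}P(f)$ to $f'=\lambda(f-d)$. Your justification --- the three root-branches of $P(w)=c_{0}P(f(z))$ are locally holomorphic and distinct off a discrete set, so ``by connectedness of the plane the choice is globally constant'' --- fails on two counts. The complement of a discrete set in $\mathbb{C}$ is connected but not simply connected, so the branches are subject to monodromy and there is no global labelling for the choice to be constant in; and even granting that $f'$ is a globally single-valued selection (which it is, being entire), such a selection need not be an affine function of $f$: a priori $(f,f')$ could parametrize a component of the curve $\{P(x)=c_{0}P(y)\}$ on which $y$ has degree $2$ or $3$ over $x$. The standard repair is geometric: $(f,f')$ is a nonconstant entire parametrization of some irreducible component of that bidegree-$(3,3)$ curve, which by Picard's theorem forces the component to be rational with at most two points at infinity, and one must then show the only admissible components are graphs of affine maps permuting $S$. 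Alternatively one can compare the counting functions of the $a$-, $b$-, $c$-points of $f$ and $f'$ directly. Either way this step requires an actual argument; as written it is not one.
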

In the next year, Chang and Zalcman(\cite{2}) replaced the entire function by meromorphic function with at most finitely many simple poles in Theorem B and C and obtained similar results as follows.
\begin{theoD}(\cite{2}) Let $S=\{0,a,b\}$, where $a,b$ are two non-zero distinct complex numbers. If $f$ is a meromorphic function with at most finitely many poles and $E_{f}(S)=E_{f'}(S)$, then $f\equiv f'$.
\end{theoD}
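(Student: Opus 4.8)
The plan is to reduce everything to the single auxiliary function
\[
\Phi=\frac{P(f')}{P(f)},\qquad P(w)=w(w-a)(w-b),
\]
and to read the structure of $f$ off the structure of $\Phi$. The first observation is that the hypothesis $E_f(S)=E_{f'}(S)$ says exactly that $P(f)$ and $P(f')$ have identical zeros with identical multiplicities; hence every zero and pole of $\Phi$ must sit at a pole of $f$ (equivalently of $f'$). Since $f$ has at most finitely many poles, $\Phi$ has at most finitely many zeros and poles, so $\Phi=Re^{\alpha}$ with $R$ rational and $\alpha$ entire. A local Laurent computation at a pole of $f$ of order $m$ shows $P(f)$ has a pole of order $3m$ and $P(f')$ one of order $3m+3$, so $\Phi$ has a pole of order exactly $3$ there and no finite zeros at all; in particular $R$ has no finite zeros, and $f$ is entire if and only if $R$ is constant.

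The central goal is then to prove that $\Phi$ is a nonzero constant, equivalently $P(f')\equiv c\,P(f)$. First I would bound the growth of $f$: since $f$ has finitely many poles, $\overline N(r,f)=O(\log r)=S(r,f)$, so the second fundamental theorem applied to $0,a,b,\infty$ gives
\[
2\,T(r,f)\le \overline N\!\left(r,\tfrac1f\right)+\overline N\!\left(r,\tfrac1{f-a}\right)+\overline N\!\left(r,\tfrac1{f-b}\right)+S(r,f),
\]
and the three reduced counting functions on the right together count the points of $E_f(S)=E_{f'}(S)$ once each. Running the same estimate for $f'$ (which also has finitely many poles) and comparing the two should force $f$ to have finite order and, through $R$, no poles at all, i.e.\ $f$ entire. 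The genuinely hard part is controlling $m(r,\Phi)$: the naive splitting of $\Phi$ into logarithmic-derivative factors leaves terms like $m(r,1/(f-a))$ that are not small, so $\alpha$ is not constant for free. I would attack this by exploiting the symmetry between $f$ and $f'$ together with a Clunie/Milloux-type argument (or, following the normal-families origin of Theorems B and C, a Zalcman rescaling around the points of $E_f(S)$) to show that $\alpha$ cannot grow and is therefore constant. This is the step I expect to absorb most of the work.

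Once $\Phi\equiv c$, the problem becomes the algebraic-differential identity
\[
P(f')-c\,P(f)=0,
\]
in which the factor $f'-f$ appears precisely when $c=1$ and the remaining factors encode the exceptional solutions. With $f$ already shown to be entire, I would invoke Theorem C for the set $\{0,a,b\}$, which leaves exactly the three listed forms: form (1) is $f=Ce^{z}$, for which $f'\equiv f$ and we are done. The crux of the endgame, and the second obstacle, is to rule out the translated-exponential forms (2) and (3): one substitutes them back into $E_f(S)=E_{f'}(S)$ and checks that the sharing relation, together with the side conditions on $a,b$ attached to each form, cannot hold except when it degenerates into $f\equiv f'$. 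This is exactly the point where the arithmetic of the set $\{0,a,b\}$ enters, it is the most delicate computation in the argument, and I would organize it as a short case analysis on which of the side conditions of Theorem C is allowed to hold.
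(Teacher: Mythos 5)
First, a point of comparison: the paper does not prove Theorem D at all --- it is quoted as background from Chang--Zalcman \cite{2} --- so there is no internal proof to measure your argument against. Your opening reduction to $\Phi=P(f')/P(f)$ is sound as far as it goes: the sharing hypothesis does make the zeros of $P(f)$ and $P(f')$ coincide with multiplicities, and the local computation showing that a pole of $f$ of order $m$ gives $\Phi$ a pole of order exactly $3$ (hence $\Phi$ has no zeros and only finitely many poles) is correct and is broadly the same opening move used in the literature. But what you have written is a plan, not a proof. The two steps you yourself flag as ``the genuinely hard part'' and ``the most delicate computation'' --- controlling $m(r,\Phi)$ so that the entire factor $\alpha$ is constant and $f$ has no poles, and then eliminating cases (2) and (3) of Theorem C --- are exactly where all of the content of the theorem lives, and neither is carried out; saying one ``would'' use a Clunie/Milloux estimate or a Zalcman rescaling does not discharge the obligation.

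More seriously, the endgame you propose cannot succeed for the statement as it is quoted here, because that statement is false as written. Take $b=-a$, so $S=\{0,a,-a\}$, and $f(z)=Ce^{-z}$. Then $f$ is entire, neither $f$ nor $f'=-Ce^{-z}$ ever takes the value $0$, $f=a$ exactly where $f'=-a$ and $f=-a$ exactly where $f'=a$, and all these points are simple; hence $E_{f}(S)=E_{f'}(S)$ while $f'\equiv -f\not\equiv f$. (Similarly $b=2a$ with $f=Ce^{-z}+2a$, for which $f'=2a-f$.) These are precisely the surviving instances of case (2) of Theorem C, and they are excluded in Chang--Zalcman's original theorem by the hypotheses $a^{2}\neq b^{2}$, $a\neq 2b$, $b\neq 2a$, $a^{2}-ab+b^{2}\neq 0$, which have been dropped in the transcription above. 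So your proposed case analysis on the side conditions of Theorem C would not close: you would instead discover that the exceptional exponential forms genuinely occur. A correct write-up must either restore those hypotheses or settle for a trichotomy of the type given in Theorem E rather than the identity $f\equiv f'$.
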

\begin{theoE}(\cite{2}) Let $f$ be a non-constant meromorphic function with at most finitely many simple poles; and let $S=\{0,a,b\}$, where $a,b$ are distinct non zero complex numbers. If $E_{f}(S)=E_{f'}(S)$, then either
\begin{enumerate}
\item $f(z)=Ce^{z}$; or
\item $f(z)=Ce^{-z}+\frac{2}{3}(a+b)$ and either $(a+b)=0$ or $(2a^{2}-5ab+2b^{2})=0$; or
\item $f(z)=Ce^{\frac{-1 \pm i\sqrt{3}}{2}z}+\frac{3 \pm i\sqrt{3}}{6}(a+b)$ and $a^2-ab+b^2=0$,\\
where $C$ is a non-zero constant.
\end{enumerate}
\end{theoE}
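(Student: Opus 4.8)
The plan is to manufacture from the set--sharing hypothesis a single auxiliary function with no zeros and only finitely many poles, to determine its shape, and then to convert the resulting identity into a first order linear relation between $f$ and $f'$ that can be solved explicitly. Write $P(w)=w(w-a)(w-b)$ and set $F=P(f)=f(f-a)(f-b)$ and $G=P(f')=f'(f'-a)(f'-b)$. The hypothesis $E_{f}(S)=E_{f'}(S)$ says precisely that $F$ and $G$ have the same zeros with the same multiplicities, so
\[
\phi:=\frac{G}{F}=\frac{f'(f'-a)(f'-b)}{f(f-a)(f-b)}
\]
has neither zeros nor poles except at the poles of $f$. As $f$ has at most finitely many simple poles, at each such pole $F$ has a triple pole and $G$ a pole of order six, so $\phi$ has a triple pole there; hence $\phi$ is zero--free with finitely many poles and may be written $\phi=e^{\gamma}/Q$, with $\gamma$ entire and $Q$ a polynomial whose zeros mark the poles of $f$. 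Everything now hinges on pinning down $\gamma$ and $Q$ and then exploiting $G=\phi F$.

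First I would control the growth. Since $f$ has finitely many poles, $N(r,f)=O(\log r)=S(r,f)$, so $T(r,f')\le T(r,f)+S(r,f)$ and therefore $T(r,\phi)=O(T(r,f))$. Writing $\phi'/\phi=\gamma'-Q'/Q$ and expanding $\phi'/\phi=G'/G-F'/F$ as a sum of logarithmic derivatives of $f,f-a,f-b$ and of $f',f'-a,f'-b$ gives $m(r,\phi'/\phi)=S(r,f)$, while the finitely many poles give $N(r,\phi'/\phi)=S(r,f)$; hence $\gamma'$ is a small function of $f$. The object of this step is to upgrade this into the statement that $f$ has \emph{finite order} and that $\gamma$ is a polynomial with $\deg\gamma\le1$, so that $\phi=Ae^{\alpha z}/Q$ for constants $A,\alpha$. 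It is here that the hypothesis of at most finitely many \emph{simple} poles is indispensable: it permits a rescaling (Zalcman--type normal family) argument, of the kind underlying Theorem B, to push the order of $f$ down to at most one.

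With $\phi=Ae^{\alpha z}/Q$ in hand I would return to the identity $G=\phi F$, that is
\[
f'(f'-a)(f'-b)=\frac{Ae^{\alpha z}}{Q}\,f(f-a)(f-b).
\]
Comparing zeros on the two sides and using that $f$ and $f'$ attain $0,a,b$ at exactly the same points with equal multiplicities, the multiplicity bookkeeping should collapse this cubic identity and force $f'-\omega f$ to be constant for some constant $\omega$; equivalently $f$ satisfies $f'=\omega f+c$, with solution $f=Ce^{\omega z}+d$ where $d=-c/\omega$.

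Finally I would feed $f=Ce^{\omega z}+d$ back into the sharing requirement itself: the points where $f'\in\{0,a,b\}$ must coincide, with multiplicity, with the points where $f\in\{0,a,b\}$. Since $f'=\omega(f-d)$, this is a purely algebraic matching of the three shifted targets, and it turns out to be consistent only for $\omega\in\{1,-1,\tfrac{-1\pm i\sqrt3}{2}\}$; each admissible $\omega$ fixes $d$ in terms of $a+b$ and imposes the stated side condition on $a,b$ (namely $a+b=0$ or $2a^{2}-5ab+2b^{2}=0$ when $\omega=-1$, and $a^{2}-ab+b^{2}=0$ for the primitive cube roots), reproducing the three alternatives. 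The main obstacle is the middle stage: proving that $\phi$ is forced into the restricted form $Ae^{\alpha z}/Q$ (finite order and $\deg\gamma\le1$) and that the cubic identity then degenerates to the affine relation $f'=\omega f+c$. Once this rigidity is secured, the concluding value--matching is routine algebra.
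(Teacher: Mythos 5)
This statement (Theorem E) is quoted in the paper from Chang--Zalcman \cite{2}; the paper itself contains no proof of it, so your attempt can only be measured against the argument in that source. Your architecture does follow the same general strategy: form $\phi=P(f')/P(f)$ with $P(w)=w(w-a)(w-b)$, note that CM sharing of the multiset $E_f(S)=E_{f'}(S)$ makes $\phi$ zero-free with only finitely many poles (of order $3$ at each simple pole of $f$), reduce the order of $f$ by a rescaling argument, and finish by matching the preimages of $0,a,b$ for $f=Ce^{\omega z}+d$. Your final step is correct and genuinely routine: since $Ce^{\omega z}$ omits $0$ and $f'=\omega(f-d)$, the sharing forces exactly one of $-d,\,a-d,\,b-d$ to vanish and the remaining two to coincide with $\{a/\omega,b/\omega\}$, which yields $\omega\in\{1,-1,\tfrac{-1\pm i\sqrt{3}}{2}\}$ together with the stated constraints on $a,b$.

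The difficulty is that the two steps you yourself flag as ``the main obstacle'' are not deferrable technicalities; they are the entire content of the theorem, and one of them, as you describe it, cannot work. First, knowing that $\gamma'$ is a small function of $f$ is very far from knowing that $\gamma$ is a polynomial of degree at most one; that reduction is exactly the Zalcman--Pang normal-families lemma of \cite{2} and \cite{3}, and you supply no argument for it. Second, and more seriously, your passage from $f'(f'-a)(f'-b)=Ae^{\alpha z}f(f-a)(f-b)/Q$ to the affine relation $f'=\omega f+c$ is justified only by ``comparing zeros on the two sides'' and ``multiplicity bookkeeping.'' But the zero information has already been exhausted: it is precisely what made $\phi$ zero-free in the first place, so the identity $G=\phi F$ contains no further zero data to compare. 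Moreover, an order-one meromorphic function with finitely many poles need not be an exponential polynomial (consider $ze^{z}$, or a Weierstrass product over the zeros of $f$), so the cubic identity does not collapse formally to a linear one; closing this gap requires genuinely new value-distribution input --- in \cite{2} it is a substantial Nevanlinna-theoretic analysis built on the second fundamental theorem applied to $f$ and $f'$ at $0,a,b,\infty$, not zero counting. As it stands your text is a correct outline of the known strategy with a correct endgame, but not a proof.
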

In 2011, Feng L\"{u}(\cite{6.0}) consider an arbitrary set with three elements in Theorem E and got the same result with some additional suppositions.
He obtained the following result.

\begin{theoF}(\cite{6.0}) Let $f$ be a non-constant transcendental meromorphic function with at most finitely many simple poles; and let $S=\{a,b,c\}$, where $a,b,~and~c$ are distinct complex numbers. If $E_{f}(S)=E_{f'}(S)$, then either
\begin{enumerate}
\item $f(z)=Ce^{z}$; or
\item $f(z)=Ce^{-z}+\frac{2}{3}(a+b+c)$ and $(2a-b-c)(2b-c-a)(2c-a-b)=0$; or
\item $f(z)=Ce^{\frac{-1 \pm i\sqrt{3}}{2}z}+\frac{3 \pm i\sqrt{3}}{6}(a+b+c)$ and $a^2+b^2+c^2-ab-bc-ca=0$,\\
where $C$ is a non-zero constant.
\end{enumerate}
\end{theoF}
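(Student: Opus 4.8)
The plan is to convert the set‑sharing hypothesis into a single algebraic–differential identity and then exploit it. Write $P(w)=(w-a)(w-b)(w-c)$, so that $E_{f}(S)$ and $E_{f'}(S)$ are exactly the zero divisors of $P(f)$ and $P(f')$; thus $E_{f}(S)=E_{f'}(S)$ says that $P(f)$ and $P(f')$ share $0$ CM. Consider the auxiliary function
\be
\Phi=\frac{P(f')}{P(f)}=\frac{(f'-a)(f'-b)(f'-c)}{(f-a)(f-b)(f-c)}.
\ee
Since the zeros of $P(f)$ and $P(f')$ coincide with multiplicities, they cancel in $\Phi$, so $\Phi$ has no zeros; and because $a,b,c$ are finite, every pole of $\Phi$ lies over a pole of $f$. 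As $f$ has only finitely many (simple) poles, $\Phi$ has at most finitely many poles. Hence $\Phi=e^{g}/Q$ for some entire $g$ and some polynomial $Q$ carrying the poles of $\Phi$, and the hypothesis becomes the identity
\be
Q\,(f'-a)(f'-b)(f'-c)=e^{g}\,(f-a)(f-b)(f-c). \label{star}
\ee

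First I would pin down the growth of $f$. Differentiating the logarithm of \eqref{star} writes $g'$ as a sum of logarithmic derivatives of $P(f)$, $P(f')$ and $Q$, whence $m(r,g')=S(r,f)$, and since $g$ is entire this gives $T(r,g')=S(r,f)$. Feeding this into the second fundamental theorem for $f$ at the values $a,b,c$ (with $\overline N(r,f)=S(r,f)$, as the poles are finite) together with the CM‑sharing equality $N(r,1/P(f))=N(r,1/P(f'))$, the goal is to force $f$ to be of finite order, to make $g$ a polynomial of degree at most one, say $g(z)=\alpha z+\beta$, to rule out poles so that $Q$ is constant and $f$ is entire, and finally to conclude from the order‑one growth of the two sides of \eqref{star} that $f=Ce^{\lambda z}+B$ for constants $C\neq0,\lambda,B$. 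I expect this to be the main obstacle: excluding the possibility that $g$ is of higher degree or transcendental is the delicate point, and it is here that a Zalcman‑type rescaling/normal‑families argument, or a Clunie‑type estimate applied to \eqref{star}, does the real work.

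Once $f=Ce^{\lambda z}+B$ is in hand, everything reduces to elementary algebra. Setting $u=e^{\lambda z}$ (which omits $0$), both $P(f)$ and $P(f')$ become cubics in $u$ whose nonzero roots are
\[
\Big\{\tfrac{a-B}{C},\tfrac{b-B}{C},\tfrac{c-B}{C}\Big\}\quad\text{and}\quad\Big\{\tfrac{a}{C\lambda},\tfrac{b}{C\lambda},\tfrac{c}{C\lambda}\Big\},
\]
and $E_{f}(S)=E_{f'}(S)$ forces these two multisets (after accounting for any degenerate root that vanishes) to coincide. Summing the resulting equations always yields $B=\tfrac{\sigma_{1}(\lambda-1)}{3\lambda}$ with $\sigma_{1}=a+b+c$, and the three permutations realizing the multiset equality give the three conclusions: the identity permutation forces $\lambda=1$, $B=0$, i.e.\ $f=Ce^{z}$; a transposition forces $\lambda=-1$, $B=\tfrac23(a+b+c)$, together with one of $2a-b-c=0$, $2b-c-a=0$, $2c-a-b=0$; and a $3$‑cycle forces $\lambda=\tfrac{-1\pm i\sqrt3}{2}$ (a primitive cube root of unity), $B=\tfrac{3\pm i\sqrt3}{6}(a+b+c)$, and $a^{2}+b^{2}+c^{2}-ab-bc-ca=0$. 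These reproduce exactly the three stated cases, and this final part is bookkeeping rather than a genuine difficulty.
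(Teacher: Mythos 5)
First, a remark on scope: Theorem F is quoted in this paper from L\"{u} \cite{6.0} as background material; the paper contains no proof of it, so your attempt can only be judged on its own merits rather than against an internal argument.

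Your framing is correct at both ends. With $P(w)=(w-a)(w-b)(w-c)$, the hypothesis $E_{f}(S)=E_{f'}(S)$ is exactly the statement that $P(f)$ and $P(f')$ have the same zero divisor, so $\Phi=P(f')/P(f)$ is zero-free with finitely many poles (all lying over the finitely many poles of $f$), the factorization $Q\,P(f')=e^{g}P(f)$ is legitimate, and the lemma on the logarithmic derivative does give $T(r,g')=S(r,f)$. The closing algebra is also sound: once $f=Ce^{\lambda z}+B$ is known, matching the multisets of nonzero roots in $u=e^{\lambda z}$ under the three conjugacy classes of permutations of $\{a,b,c\}$ reproduces precisely the three listed cases. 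The genuine gap is the middle step, which you yourself flag as ``the main obstacle'': nothing in the proposal actually derives $f=Ce^{\lambda z}+B$. Observe that $T(r,g')=S(r,f)$ does not bound the order of $f$, does not force $g$ to be a polynomial (let alone linear), and does not by itself eliminate the poles of $f$ or reduce $Q$ to a constant; invoking ``a Zalcman-type rescaling or a Clunie-type estimate'' is a placeholder for what is in fact the entire analytic content of the theorem. In L\"{u}'s paper, and in the Chang--Fang--Zalcman antecedents (Theorems C and E above), this is exactly where the work lies: one combines the second fundamental theorem with the sharing identity to control $N(r,1/P(f))$, and then a normal-families argument together with a delicate case analysis pins $f$ down to the exponential form. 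As written, your proof is a correct frame around an unfilled centre, so it cannot be accepted as a proof of Theorem F.
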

So we observe from the above mentioned results that the researchers were mainly involved to find the uniqueness of an entire or meromorphic function with its first derivative sharing a set at the expanse of allowing several constraints.
But all were practically tacit about the uniqueness of an entire or meromorphic function with its higher order derivatives.
 In 2007  Chang, Fang and Zalcman (\cite{2.0}) consider the following example to show that in Theorem C, one can not relax the CM sharing to IM sharing of the set $S$. In other words, when multiplicity is disregarded, the uniqueness result ceases to hold.
\begin {exm} Let $S=\{-1,0,1\}$ and $f(z)=\sin z$. Then $f$ and $f'$ share $S$ IM but $f\not\equiv f'$. \end{exm}
  Thus it is natural to ask the following question :-
\begin{ques} Does there exist any set which when shared by a meromorphic function together with its higher order derivative or even a power of a meromorphic function together with its differential polynomial, lead to wards the uniqueness ?\end{ques}
To seek the possible answer of the above question is the motivation of the paper. We answer the above question even under relaxed sharing hypothesis.
 To this end, we resort to the notion of weighted sharing of sets appeared in the literature in 2001 (\cite{6}).
\begin{defi} (\cite{6}) Let $k$ be a nonnegative integer or infinity. For $a\in\mathbb{C}\cup\{\infty\}$ we denote by $E_{k}(a;f)$ the set of all $a$-points of $f$, where an $a$-point of multiplicity $m$ is counted $m$ times if $m\leq k$ and $k+1$ times if $m>k$. If $E_{k}(a;f)=E_{k}(a;g)$, we say that $f,g$ share the value $a$ with weight $k$.\end{defi}

We write $f$, $g$ share $(a,k)$ to mean that $f$, $g$ share the value $a$ with weight $k$. Clearly if $f$, $g$ share $(a,k)$, then $f$, $g$ share $(a,p)$ for any integer $p$, $0\leq p<k$. Also we note that $f$, $g$ share a value $a$ IM or CM if and only if $f$, $g$ share $(a,0)$ or $(a,\infty)$ respectively.\par
\begin{defi}(\cite{6}) Let $S$ be a set of distinct elements of $\mathbb{C}\cup\{\infty\}$ and $k$ be a nonnegative integer or $\infty$. We denote by $E_{f}(S,k)$ the set $\bigcup_{a\in S}E_{k}(a;f)$. If $E_{f}(S,k)=E_{g}(S,k)$, then we say $f$, $g$ share the set $S$ with weight $k$. \end{defi}
Throughout the paper we use the following notation for $L$.
\begin{defi}
Let $k(\geq1),l(\geq1)$ be positive integers and $a_{i} \in \mathbb{C}$ for $i=0,2,...,k-1$. For a non constant meromorphic function $f$, we define the differential polynomial in $f$ as  $$L=L(f)=a_{0}(f^{(k)})^{l}+a_{1}(f^{(k-1)})^{l}+...+a_{k-1}(f^{'})^{l}.$$
\end{defi}
First suppose $P(z)$ is defined by \be\label{e5.1} P(z)=az^{n}-n(n-1)z^{2}+2n(n-2)bz-(n-1)(n-2)b^{2}\ee where $n\geq 3$ is an integer and $a$ and $b$ are two nonzero complex numbers satisfying $ab^{n-2}\not=2$. We have from (\ref{e5.1}) \bea\label{e5.2}P^{'}(z)&=& naz^{n-1}-2n(n-1)z+2n(n-2)b\\&=&\frac{n}{z}[az^{n}-2(n-1)z^{2}+2(n-2)bz].\nonumber\eea
We note that $P^{'}(0)\not= 0$ and so from (\ref{e5.1}) $P^{'}(z)=0$ implies
$$az^{n}-2(n-1)z^{2}+2(n-2)bz=0.$$
Now at each root of $P^{'}(z)=0$ we get
\beas &&P(z)\\&=&az^{n}-n(n-1)z^{2}+2n(n-2)bz-(n-1)(n-2)b^{2}\\&=& 2(n-1)z^{2}-2(n-2)bz-n(n-1)z^{2}+2n(n-2)bz-(n-1)(n-2)b^{2}\\& =& -(n-1)(n-2)(z-b)^{2}\eeas
So at a root of $P^{'}(z)=0$, $P(z)$ will be zero if $P^{'}(b)=0$. But $P^{'}(b)=nb(ab^{n-2}-2)\not =0$, which implies that a zero of $P^{'}(z)$ is not a zero of $P(z)$. In other words each zero of $P(z)$ is simple.
The following theorem is the main result of this paper which answers Question 1.1.
\begin{theo}\label{thB1} Let $m(\geq1),n(\geq1)$ be positive integers and $f$ be a non constant meromorphic function.
Suppose that $S=\{z :P(z)=0 \}$ and $E_{f^{m}}(S,p)=E_{L(f)}(S,p)$. If one of the following conditions holds:
\begin{enumerate}
\item $2 \leq p < \infty$ and $n > 6+6\frac{\mu+1}{\lambda-2\mu},$
\item $p=1$ and $n > \frac{13}{2}+7\frac{\mu+1}{\lambda-2\mu},$
\item $p=0$ and $n> 6+3\mu+6\frac{(\mu+1)^{2}}{\lambda-2\mu};$
\end{enumerate}
then  $f^{m} \equiv L(f)$, where $\lambda=\min\{m(n-2)-1,(1+k)l(n-2)-1\}$ and $\mu=\min\{\frac{1}{p},1\}$.
\end{theo}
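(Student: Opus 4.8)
The plan is to convert the set-sharing hypothesis into a value-sharing problem for the two composite functions $\mathcal{F}:=P(f^{m})$ and $\mathcal{G}:=P(L(f))$ and then to run a weighted-sharing analysis of Lahiri type. Because $S$ is exactly the zero set of $P$ and, as verified just above the statement, every zero of $P$ is simple, the multiplicity of a zero of $\mathcal{F}$ at a point equals the multiplicity with which $f^{m}$ attains the corresponding element of $S$ there; consequently $E_{f^{m}}(S,p)=E_{L(f)}(S,p)$ is equivalent to saying that $\mathcal{F}$ and $\mathcal{G}$ share the value $0$ with weight $p$. The decisive structural remark is that $L(f)$, being a differential polynomial in $f$, has poles only at the poles of $f$, and the same holds for $f^{m}$; hence $\mathcal{F}$ and $\mathcal{G}$ have their poles at precisely the same points, with multiplicities $nmq$ for $\mathcal{F}$ and $n(q+k)l$ for $\mathcal{G}$ at a pole of $f$ of order $q$. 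I would record at once the identities $T(r,\mathcal{F})=nm\,T(r,f)+S(r,f)$ and $T(r,\mathcal{G})=n\,T(r,L(f))+S(r,f)$, together with the standard estimates bounding the characteristic and the reduced counting functions of the differential polynomial $L(f)$ in terms of $T(r,f)$, so that every quantity appearing later is expressed through $T(r,f)$.

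Next I would introduce
\[
H=\left(\frac{\mathcal{F}''}{\mathcal{F}'}-\frac{2\mathcal{F}'}{\mathcal{F}}\right)-\left(\frac{\mathcal{G}''}{\mathcal{G}'}-\frac{2\mathcal{G}'}{\mathcal{G}}\right)
\]
and argue by the dichotomy $H\equiv 0$ versus $H\not\equiv0$. Suppose first $H\not\equiv0$. The lemma on the logarithmic derivative gives $m(r,H)=S(r,f)$, while a local computation shows that every simple common zero of $\mathcal{F}$ and $\mathcal{G}$ is a zero of $H$ and that the poles of $H$ can only lie over the poles of $f$, the multiple zeros, and the points where the weight-$p$ multiplicities of $\mathcal{F}$ and $\mathcal{G}$ disagree. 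Inserting these facts into the second main theorem for $\mathcal{F}$ and for $\mathcal{G}$, and converting the reduced counting functions into multiples of $T(r,f)$, I expect to reach an inequality which, after rearrangement, takes the shape
\[
\{\,n-6\,\}(\lambda-2\mu)\,T(r,f)\le 6(\mu+1)\,T(r,f)+S(r,f)
\]
for $2\le p<\infty$, with analogous inequalities (carrying the constants $\tfrac{13}{2},7$ when $p=1$, and an extra factor $\mu+1$ together with a summand $3\mu$ when $p=0$) in the remaining cases. Here the gain $\lambda=\min\{m(n-2)-1,(1+k)l(n-2)-1\}$ is produced by the full-versus-reduced discrepancy forced by the power $m$ in $f^{m}$ and by the pole order $(1+k)l$ of $L(f)$, while the loss $\mu=\min\{1/p,1\}$ records the weighted-sharing defect. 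Conditions (1)--(3) are precisely the requirement that the coefficient on the left exceed the one on the right; the resulting $T(r,f)=S(r,f)$ contradicts the non-constancy of $f$, so this case cannot occur.

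It therefore remains to treat $H\equiv0$. Integrating twice yields the bilinear relation $\tfrac{1}{\mathcal{F}}=\tfrac{A}{\mathcal{G}}+B$ with constants $A\neq0$ and $B$. If $B\neq0$, then $\mathcal{F}$ omits the value $1/B$; combining this deficiency with the coincidence of the pole sets of $\mathcal{F}$ and $\mathcal{G}$ and with the large degree $nm$ again violates the second main theorem under (1)--(3), forcing $B=0$. Then $P(f^{m})\equiv C\,P(L(f))$; comparing the pole multiplicities $nmq$ and $n(q+k)l$ over the common poles of $f$, together with the normalization built into $P$, pins down $C=1$, so that $P(f^{m})\equiv P(L(f))$. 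At this final step I would invoke the uniqueness-polynomial property of $P$, guaranteed by the simplicity of its zeros, by $P'(b)=nb(ab^{n-2}-2)\neq0$, and by the computation that $P$ carries the zeros of $P'$ to the nonzero values $-(n-1)(n-2)(z-b)^{2}$; this property lets $P(f^{m})\equiv P(L(f))$ collapse to $f^{m}\equiv L(f)$, which is the assertion.

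The step I expect to be the main obstacle is the bookkeeping in the case $H\not\equiv0$: one must bound the Nevanlinna functions of the differential polynomial $L(f)$, and of its composite $P(L(f))$, solely in terms of $T(r,f)$, and must extract from the unequal pole orders of $f^{m}$ and $L(f)$ the exact gain $\lambda$ and weight-dependent loss $\mu$ that separate the three numerical thresholds. A secondary difficulty is the clean elimination of the bilinear relation with $B\neq0$ and the verification that $P$ is a uniqueness polynomial in the sense required, so that the collapse $P(f^{m})\equiv P(L(f))\Rightarrow f^{m}\equiv L(f)$ is fully justified.
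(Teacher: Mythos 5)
Your overall architecture matches the paper's: translate the set sharing into value sharing for composite functions, split on whether the auxiliary function $H$ vanishes identically, and in the non-vanishing case run the second main theorem with weighted-sharing bookkeeping, extracting the gain $\lambda$ from the unequal pole orders of $f^{m}$ and $L(f)$. (The paper normalizes differently, taking $F=R(f^{m})$, $G=R(L(f))$ with $R(z)=\frac{az^{n}}{n(n-1)(z-\alpha_{1})(z-\alpha_{2})}$ so that $F,G$ share the value $1$; this is not merely cosmetic, because it concentrates the zeros of $F$ at the zeros of $f$ with multiplicity at least $nm$ and the poles partly at the $\alpha_{i}$-points, which is exactly what produces the $\frac{3}{n}T$ and $\frac{5}{n}T$ bounds used to kill the degenerate M\"{o}bius subcases. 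With your choice $\mathcal{F}=P(f^{m})$ the zeros of $\mathcal{F}$ are the generically simple $S$-points of $f^{m}$ and carry no multiplicity gain, so your claim that ``$\mathcal{F}$ omits $1/B$'' plus the coincidence of pole sets ``violates the second main theorem'' does not go through as stated — for an entire $f$ this gives no contradiction at all. You would need to rework that case or adopt the paper's normalization. Also, the mechanism that actually yields $\overline{N}(r,\infty;f)\le\frac{\mu+1}{\lambda-2\mu}(\overline{N}(r,0;f)+\overline{N}(r,0;L(f)))$ is a second auxiliary function $V=\frac{F'}{F(F-1)}-\frac{G'}{G(G-1)}$, whose zeros at poles of $f$ have order at least $\lambda$; your sketch gestures at the right intuition but does not supply this.)

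The genuine gap is your final step. After reducing to $P(f^{m})\equiv P(L(f))$ (equivalently $F\equiv G$), you ``invoke the uniqueness-polynomial property of $P$, guaranteed by the simplicity of its zeros'' and by $P'(b)\neq 0$. That is not a valid inference: a polynomial with simple zeros, or even with the property that it is nonzero at its critical points, need not be a uniqueness polynomial, and establishing that $P(w_{1})\equiv P(w_{2})$ forces $w_{1}\equiv w_{2}$ is precisely the hardest part of the $H\equiv 0$ case, not a citable fact. The paper proves it concretely: writing $h=L(f)/f^{m}$, the identity $F\equiv G$ becomes
$$n(n-1)h^{2}f^{2m}(h^{n-2}-1)-2n(n-2)bhf^{m}(h^{n-1}-1)+(n-1)(n-2)b^{2}(h^{n}-1)=0,$$
viewed as a quadratic in $f^{m}$; its discriminant is governed by $Q(z)=(n-1)^{2}(z^{n}-1)(z^{n-2}-1)-n(n-2)(z^{n-1}-1)^{2}=(z-1)^{4}\prod_{i=1}^{2n-6}(z-\beta_{i})$ with the $\beta_{i}$ distinct and $\neq 0,1$ (Lemma 2.5), so each value $\beta_{i}$ is taken by $h$ only with multiplicity at least $2$, and the second main theorem applied to $h$ with these $2n-6$ values forces $h$ to be constant, hence $h=1$ since $b\neq 0$. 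Without this argument (or an equivalent verification that $P$ is a uniqueness polynomial for the class of functions at hand), your proof does not close.
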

\begin{cor}\label{thB12}
There exists a set $S$ with eight(seven) elements such that if a non constant meromorphic (entire) function $f$ and its k-th derivative $f^{(k)}$ satisfy $E_{f}(S,2)=E_{f^{(k)}}(S,2)$, then $f\equiv f^{(k)}$.
\end{cor}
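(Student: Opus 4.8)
The plan is to obtain Corollary \ref{thB12} as a direct specialization of Theorem \ref{thB1}. First I would put $m=1$, so that $f^{m}=f$, and choose the differential polynomial $L$ in the simplest possible way: take $l=1$, $a_{0}=1$ and $a_{1}=\cdots=a_{k-1}=0$, so that $L(f)=f^{(k)}$. With these choices the hypothesis $E_{f^{m}}(S,p)=E_{L(f)}(S,p)$ of Theorem \ref{thB1} becomes exactly $E_{f}(S,p)=E_{f^{(k)}}(S,p)$, and taking $p=2$ matches the weighted sharing $E_{f}(S,2)=E_{f^{(k)}}(S,2)$ assumed in the corollary. The set $S$ is the zero set of the polynomial $P$ from (\ref{e5.1}); since each zero of $P$ was shown to be simple, $S$ has exactly $n$ elements, so prescribing the degree $n$ prescribes the cardinality of $S$.

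Next I would evaluate the parameters appearing in Theorem \ref{thB1} for this specialization. Since $p=2$ we get $\mu=\min\{\tfrac12,1\}=\tfrac12$, and since $m=l=1$ and $k\geq 1$ we have $\lambda=\min\{(n-2)-1,\,(1+k)(n-2)-1\}=n-3$, the first term being the smaller one for every $n\geq 2$. Condition (1) of the theorem, $n>6+6\frac{\mu+1}{\lambda-2\mu}$, therefore reduces to an inequality in $n$ alone, namely $n>6+\frac{9}{n-4}$ (valid once $\lambda-2\mu=n-4>0$). Solving this inequality for the least admissible integer $n$ fixes the number of elements of $S$ needed in the meromorphic case; one then exhibits concrete nonzero constants $a,b$ with $ab^{n-2}\neq 2$ so that a genuine set $S$ of that cardinality exists, and Theorem \ref{thB1} yields $f^{m}\equiv L(f)$, i.e. $f\equiv f^{(k)}$.

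For the entire case I would note that if $f$ is entire then $f^{(k)}$ is entire as well, so every pole-counting term disappears: $N(r,f)=\overline N(r,f)=0$ and likewise for $f^{(k)}$. Re-examining the estimate that produces condition (1) in the proof of Theorem \ref{thB1} with these terms deleted lowers the numerical threshold on $n$, and one checks that a set with one fewer element now suffices; this is the source of the ``seven versus eight'' in the statement. The main obstacle is precisely this entire refinement: it is not formally contained in Theorem \ref{thB1} as stated (which is phrased for meromorphic $f$), so I would have to revisit the second-main-theorem step inside that proof, isolate exactly which contributions come from poles, and re-run the bound after dropping them. By contrast the meromorphic half is purely a matter of the specialization above together with the elementary inequality for $n$, and the only remaining routine checks are the arithmetic solving of that inequality and the explicit construction of an admissible $P$.
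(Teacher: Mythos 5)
Your route---specialize Theorem \ref{thB1} with $m=1$, $l=1$, $a_{0}=1$, $a_{1}=\cdots=a_{k-1}=0$, $p=2$, and take $S$ to be the zero set of $P$---is certainly the intended one (the paper states the corollary without any proof, so there is nothing explicit to compare against). Your identification of the entire case as requiring a re-run of the proof rather than a formal invocation of the theorem is also correct: for entire $f$ one has $\overline{N}(r,\infty;f)=0$, inequality (\ref{pe1.1.6}) collapses to $(n-6)T(r)\leq S(r)$, and Case 2 also only needs $n>6$, so $n=7$ genuinely suffices there.

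The gap is in the meromorphic half, precisely in the arithmetic you defer as ``routine.'' With $\mu=\tfrac12$ and $\lambda=n-3$ (both of which you compute correctly), condition (1) reads $n>6+\frac{9}{n-4}$, i.e.\ $(n-6)(n-4)>9$. For $n=8$ this gives $2\cdot 4=8>9$, which is false; the least integer satisfying the inequality is $n=9$. So the specialization of Theorem \ref{thB1} produces a set with \emph{nine} elements, not eight, and your sentence ``solving this inequality\ldots fixes the number of elements of $S$ needed'' silently assumes the answer is eight when it is not. Equivalently, at $n=8$ the chain $(n-6)T(r)\leq 6\overline{N}(r,\infty;f)+S(r)\leq 6\cdot\frac{3/2}{4}\,T(r)+S(r)=\frac94 T(r)+S(r)$ yields no contradiction. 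To actually obtain eight you would need a sharper estimate than Lemma \ref{l1.1.1} provides for $L(f)=f^{(k)}$ (and I do not see one: the minimum in $\lambda$ is attained by the $F$-term $m(n-2)-1=n-3$, so the higher pole order of $f^{(k)}$ does not help). As written, your argument proves the corollary with ``nine'' in place of ``eight''; the stated bound of eight does not follow from Theorem \ref{thB1}, and you should either flag this discrepancy or supply the missing refinement.
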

The following example shows that for a non-constant entire function the set $S$ in Theorem 1.1 can not be replaced by an arbitrary set containing seven distinct elements.
\begin{exm} For a non-zero complex number $a$, let $S=\{0,a\omega, a\sqrt{\omega},a, \frac{a}{\sqrt{\omega}}, \frac{a}{\omega}, \frac{a}{\omega \sqrt{\omega}}\}$, where $\omega$ is the non-real cubic root of unity. Choosing $f=e^{{\omega}^{^{\frac{1}{2k}}}z}$, it is easy to verify that $f$ and $f^{(k)}$ share $(S,\infty)$, but $f\not \equiv f^{(k)}$ \end{exm}
\begin{rem} However the following questions are still open.
\begin{enumerate}
\item Can the cardinality of the set $S$ be further reduced in the Theorem \ref{thB1} and specially in Corollary \ref{thB12} without imposing any constraints on the functions?
\item Can the conclusion of Theorem \ref{thB1} remain valid if any non-homogeneous differential polynomial generated by $f$ is considered?
\end{enumerate}
\end{rem}
\section {Lemmmas}
We define $R(z)=\frac{az^{n}}{n(n-1)(z-\alpha_{1})(z-\alpha_{2})}$, where $\alpha_{1}$ and $\alpha_{2}$ are the distinct roots of the equation $$n(n-1)z^{2}-2n(n-2)bz+(n-1)(n-2)b^{2}=0.$$
Now let  $F=R(f^{m})$ , $G=R(L(f))$ and
$$H=(\frac{F''}{F'}-\frac{2F'}{F-1})-(\frac{G''}{G'}-\frac{2G'}{G-1}).$$
\begin{lem}
For any two non-constant meromorphic functions $f_{1}$ and $f_{2}$,
$$\ol{N}(r,f_{1}f_{2}) \leq \ol{N}(r,f_{1})+\ol{N}(r,f_{2}).$$
\end{lem}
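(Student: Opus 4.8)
The plan is to exploit the fact that the reduced counting function $\ol N(r,\cdot)$ records only the \emph{location} of poles and is completely insensitive to their orders; this reduces the analytic inequality to an elementary set-theoretic containment together with the subadditivity of $\ol N$ over unions. First I would establish the containment at the level of pole sets. If $z_{0}$ is a point at which \emph{neither} $f_{1}$ nor $f_{2}$ has a pole, then both functions are holomorphic in a neighbourhood of $z_{0}$, and hence so is their product $f_{1}f_{2}$. Taking contrapositives, every pole of $f_{1}f_{2}$ must be a pole of at least one of $f_{1}$, $f_{2}$. Thus the set of distinct poles of $f_{1}f_{2}$ is contained in the union of the set of distinct poles of $f_{1}$ and the set of distinct poles of $f_{2}$.

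Next I would pass from sets to counting functions. Since $\ol N(r,\cdot)$ counts each pole exactly once, irrespective of multiplicity, the containment just obtained shows that every point contributing to $\ol N(r,f_{1}f_{2})$ also contributes (once) to the union of the pole sets of $f_{1}$ and $f_{2}$. Finally, because every point of that union lies in the pole set of $f_{1}$ or of $f_{2}$, it is counted in at least one of the two terms on the right; this subadditivity of the reduced counting function over a union yields $\ol N(r,f_{1}f_{2})\le\ol N(r,f_{1})+\ol N(r,f_{2})$, which is exactly the claim.

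There is no genuine obstacle here, only two points that merely confirm the direction of the inequality. A pole common to $f_{1}$ and $f_{2}$ is counted twice on the right but only once on the left, which is harmless; and a pole of one factor may be annihilated by a zero of the other, so that $z_{0}$ fails to be a pole of $f_{1}f_{2}$ at all, which can only shrink the left-hand side. Both phenomena strengthen rather than threaten the inequality, so the argument is complete once the pole-set containment and the subadditivity of $\ol N$ are in place.
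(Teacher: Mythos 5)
Your argument is correct and is precisely the standard one: the paper states this lemma without proof, evidently regarding the pole-set containment (a pole of $f_{1}f_{2}$ is a pole of $f_{1}$ or of $f_{2}$) and the resulting inequality for the reduced counting functions as routine. Your write-up supplies exactly that reasoning, so there is nothing to reconcile; one could add only the trivial remark that the pointwise inequality $\ol{n}(t,f_{1}f_{2})\le\ol{n}(t,f_{1})+\ol{n}(t,f_{2})$ passes to $\ol{N}$ upon integration against $dt/t$.
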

\begin{lem}\label{l.n.1}
 Let $F$ and $G$ share $(1,p)$ where $F$ and $G$ defined as earlier, then\par
$\overline{N}_{L}(r,1;F)\leq \mu(\overline{N}(r,0;f)+\overline{N}(r,\infty;f))+S(r,f)$ \\
$\overline{N}_{L}(r,1;G)\leq \mu (\overline{N}(r,0; L(f))+\overline{N}(r,\infty;f))+S(r,f)$\\
where $\mu=\min\{\frac{1}{p},1\}.$
\end{lem}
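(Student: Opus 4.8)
The plan is to read off the $1$-points of $F$ and $G$ from the factorisations
$F-1=\dfrac{P(f^{m})}{n(n-1)(f^{m}-\alpha_{1})(f^{m}-\alpha_{2})}$ and its analogue $G-1=\dfrac{P(L(f))}{n(n-1)(L(f)-\alpha_{1})(L(f)-\alpha_{2})}$, to translate the multiplicities of these $1$-points into orders of vanishing of $f'$ (respectively $L(f)'$), and then to bound the latter by a logarithmic--derivative estimate. First I would record three structural facts about $S=\{z:P(z)=0\}$. Since $P(0)=-(n-1)(n-2)b^{2}\neq 0$ we have $0\notin S$; since $P(\alpha_{i})=a\alpha_{i}^{\,n}$ while $\alpha_{1}\alpha_{2}=\tfrac{(n-2)b^{2}}{n}\neq 0$, we have $\alpha_{1},\alpha_{2}\notin S$; and by the computation preceding the theorem every zero of $P$ is simple. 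Consequently, at a point $z_{0}$ with $F(z_{0})=1$ we have $f^{m}(z_{0})=s$ for some $s\in S$ with $s\neq 0,\alpha_{1},\alpha_{2}$, so $f(z_{0})\neq 0,\infty$ and $f^{m-1}(z_{0})\neq 0$; moreover, because $s$ is a simple zero of $P$ and $(f^{m}-\alpha_{1})(f^{m}-\alpha_{2})$ is nonzero there, the multiplicity of the $1$-point of $F$ at $z_{0}$ equals the order of the zero of $f^{m}-s$, which is one more than the order of the zero of $f'$ at $z_{0}$.

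Next I would exploit the weighted sharing. A point counted in $\overline{N}_{L}(r,1;F)$ is a $1$-point of $F$ whose $F$-multiplicity strictly exceeds its $G$-multiplicity; since $F$ and $G$ share $(1,p)$, this forces the $F$-multiplicity to be at least $p+1$, so $f'$ vanishes there to order at least $p$ (and to order at least $1$ when $p=0$). Writing $N_{0}(r,0;f')$ for the counting function of the zeros of $f'$ that are not zeros of $f$, the preceding paragraph gives $p\,\overline{N}_{L}(r,1;F)\leq N_{0}(r,0;f')$ for $p\geq 1$ and $\overline{N}_{L}(r,1;F)\leq N_{0}(r,0;f')$ for $p=0$. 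It then remains to estimate $N_{0}(r,0;f')$. Applying the First Fundamental Theorem and the lemma on the logarithmic derivative to $f'/f$, whose poles are simple and lie exactly at the zeros and poles of $f$ and whose zeros count precisely the zeros of $f'$ away from the zeros and poles of $f$, yields
\[
N_{0}(r,0;f')\leq T\!\left(r,\tfrac{f'}{f}\right)+O(1)=\overline{N}(r,0;f)+\overline{N}(r,\infty;f)+S(r,f).
\]
Combining the two displays and noting $\tfrac1p S(r,f)=S(r,f)$ gives the first assertion with $\mu=\min\{\tfrac1p,1\}$.

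The bound for $\overline{N}_{L}(r,1;G)$ follows by the identical argument applied to $G=R(L(f))$, the role of $f^{m}$ now being played by $L(f)$: a point of $\overline{N}_{L}(r,1;G)$ forces $L(f)'$ to vanish to order at least $p$ (respectively $1$), and $N_{0}(r,0;L(f)')\leq \overline{N}(r,0;L(f))+\overline{N}(r,\infty;L(f))+S(r,L(f))$ by the same logarithmic--derivative computation for $L(f)'/L(f)$. Here one uses that $L(f)$ is a differential polynomial, so its poles occur only at poles of $f$, whence $\overline{N}(r,\infty;L(f))\leq \overline{N}(r,\infty;f)$, and $S(r,L(f))=S(r,f)$. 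The main obstacle is precisely this translation step: one must verify that the multiplicity of a $1$-point of $F$ (respectively $G$) genuinely equals one more than the order of the corresponding zero of $f'$ (respectively $L(f)'$), which rests on the simplicity of the zeros of $P$ together with $0,\alpha_{1},\alpha_{2}\notin S$, and then control the resulting derivative zeros uniformly in $p$ through the single estimate on $N_{0}$.
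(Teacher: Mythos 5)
Your proposal is correct and follows essentially the same route as the paper: both arguments bound $\overline{N}_{L}(r,1;F)$ by the (weighted) excess multiplicity of the $1$-points of $F$, convert that excess into zeros of $(f^{m})'$ (equivalently of $f'$, since $0,\alpha_{1},\alpha_{2}\notin S$ and the zeros of $P$ are simple), and control those by $T\bigl(r,(f^{m})'/f^{m}\bigr)=\overline{N}(r,0;f)+\overline{N}(r,\infty;f)+S(r,f)$ via the logarithmic derivative lemma, with the identical argument for $G$ and $L(f)$. You merely spell out the structural facts about $S$ and the multiplicity bookkeeping that the paper leaves implicit.
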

\begin{proof}
When $p=0$ we get
\beas \overline{N}_{L}(r,1;F) &\leq&  N(r,1;F)-\ol{N}(r,1,F)\\
&\leq&  N(r,\frac{(f^{m})'}{f^{m}})+S(r,f)\\
&\leq& (\overline{N}(r,0;f^{m})+\overline{N}(r,\infty;f^{m}))+S(r,f)\\
&\leq& (\overline{N}(r,0;f)+\overline{N}(r,\infty;f))+S(r,f)\eeas
When $p\geq 1$ we get
\beas \overline{N}_{L}(r,1;F) &\leq& \overline{N}(r,1;F|\geq p+1)\\
&\leq& \frac{1}{p} (N(r,1;F)-\ol{N}(r,1,F))\\
&\leq& \frac{1}{p} N(r,\frac{(f^{m})'}{(f^{m})})+S(r,f)\\
&\leq& \frac{1}{p}(\overline{N}(r,0;f^{m})+\overline{N}(r,\infty;f^{m}))+S(r,f)\\
&\leq& \frac{1}{p}(\overline{N}(r,0;f)+\overline{N}(r,\infty;f))+S(r,f)\eeas
Combining the two cases we get the proof.
\end{proof}
\begin{lem}\label{l1.1.1} Let $F$ and $G$ share $(1,p)$ where $F$ and $G$ defined as earlier. If $F \not\equiv G$ then
\bea\label{el.1}
\overline{N}(r,f) &\leq& \frac{\mu+1}{\lambda-2\mu}(\overline{N}(r,0;f)+\overline{N}(r,0;L(f)))+S(r,f),\eea
where $\lambda=\min\{m(n-2)-1,(1+k)l(n-2)-1\}$ and $\mu=\min\{\frac{1}{p},1\}$.
\end{lem}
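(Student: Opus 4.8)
The plan is to transfer everything to the functions $F=R(f^{m})$ and $G=R(L(f))$ and then run the auxiliary–function machinery, reading off the constants from the arithmetic of $R$. First I would record the two structural facts that drive the estimate. Since $R(z)-1=\frac{P(z)}{n(n-1)(z-\alpha_{1})(z-\alpha_{2})}$ and every zero of $P$ is simple, the hypothesis $E_{f^{m}}(S,p)=E_{L(f)}(S,p)$ says exactly that $F$ and $G$ share $(1,p)$. Moreover $R$ has an $n$-fold zero at the origin, so every zero of $F$ (resp. $G$) has multiplicity at least $n\ge 3$; hence all zeros of $F,G$ are multiple and, as point sets, coincide with the zeros of $f$ and of $L(f)$. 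Dually, a pole of $f$ of order $q$ is a pole of $F$ of order $mq(n-2)\ge m(n-2)$ and a pole of $G$ of order $(q+k)l(n-2)\ge (1+k)l(n-2)$; the smaller of the two reduced orders is precisely $\lambda$. This is where $n$, $m$ and $(1+k)l$ enter.

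Next I would bring in $H$. By the lemma on logarithmic derivatives $m(r,H)=S(r,f)$, so $T(r,H)=N(r,H)+S(r,f)$. If $H\equiv 0$ then integrating twice forces a M\"obius relation between $F$ and $G$, a far more rigid situation that I would dispose of separately; so assume $H\not\equiv 0$. The classical observation is that a common simple $1$-point of $F$ and $G$ is a zero of $H$, so the number of such points is at most $N(r,0;H)\le N(r,H)+S(r,f)$; and the poles of $H$ are confined to the multiple zeros of $F,G$, the poles of $f$ (together with the auxiliary poles where $f^{m}=\alpha_{i}$ or $L(f)=\alpha_{i}$), and the $1$-points at which the multiplicities of $F$ and $G$ disagree.

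The heart of the proof is to convert this into the master inequality
\[
\lambda\,\overline N(r,f)\le \overline N(r,0;f)+\overline N(r,0;L(f))+\overline N_{L}(r,1;F)+\overline N_{L}(r,1;G)+S(r,f).
\]
I would obtain it by feeding the $H$-estimate into the second main theorem for $F$ (equivalently, for $f^{m}$ read against the value set $S\cup\{0,\alpha_{1},\alpha_{2}\}$), using that the only zero contributions are the automatically multiple zeros counted by $\overline N(r,0;f)$ and $\overline N(r,0;L(f))$, and that each pole of $f$ carries the large multiplicities recorded above simultaneously in $F$ and in $G$, so that comparing the ramified pole divisors against their reduced counting functions leaves the weight $\lambda$ multiplying $\overline N(r,f)$. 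This multiplicity bookkeeping — extracting the clean weight $\lambda$ from the high-order poles while absorbing the auxiliary poles produced by $f^{m}=\alpha_{i}$ and $L(f)=\alpha_{i}$ — is the step I expect to be the main obstacle, because those auxiliary poles are not a priori small and must be shown to cancel against terms of the second main theorem rather than estimated crudely.

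Finally I would invoke Lemma \ref{l.n.1}: it gives $\overline N_{L}(r,1;F)+\overline N_{L}(r,1;G)\le \mu\big(\overline N(r,0;f)+\overline N(r,0;L(f))\big)+2\mu\,\overline N(r,f)+S(r,f)$, the term $2\mu\,\overline N(r,f)$ arising from the two copies of $\overline N(r,\infty;f)$ hidden in that lemma. Substituting into the master inequality and transposing $2\mu\,\overline N(r,f)$ to the left gives $(\lambda-2\mu)\overline N(r,f)\le(\mu+1)\big(\overline N(r,0;f)+\overline N(r,0;L(f))\big)+S(r,f)$, which is the assertion, the division by $\lambda-2\mu$ being legitimate under the numerical hypotheses of Theorem \ref{thB1}.
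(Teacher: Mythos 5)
Your endgame is exactly right: once one has the master inequality
\begin{equation*}
\lambda\,\overline N(r,\infty;f)\;\le\;\overline N(r,0;f)+\overline N(r,0;L(f))+\overline N_{L}(r,1;F)+\overline N_{L}(r,1;G)+S(r,f),
\end{equation*}
Lemma \ref{l.n.1} and transposing the $2\mu\,\overline N(r,\infty;f)$ term give $(\lambda-2\mu)\overline N(r,\infty;f)\le(\mu+1)(\overline N(r,0;f)+\overline N(r,0;L(f)))+S(r,f)$, which is the assertion. But there is a genuine gap in how you propose to reach that master inequality. You want to extract it from the function $H$ together with the second main theorem, and you yourself flag the ``multiplicity bookkeeping'' as the main obstacle --- rightly so, because $H$ cannot produce the weight $\lambda$. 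At a pole of $f$ of order $t$, $F$ and $G$ have poles of the \emph{different} orders $tm(n-2)$ and $(t+k)l(n-2)$; since $\frac{F''}{F'}-\frac{2F'}{F-1}$ is the logarithmic derivative of $F'/(F-1)^2$, it has a simple pole there with residue one less than the pole order of $F$, and the corresponding term for $G$ has a different residue, so $H$ generically has a genuine pole at each pole of $f$ rather than a high-order zero. No amount of feeding $H$ into the second main theorem for $F$ will place a factor $\lambda$ in front of $\overline N(r,\infty;f)$: the reduced pole counting function enters the second main theorem with coefficient $1$.

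The missing idea is a different auxiliary function. The paper sets $V=\frac{F'}{F(F-1)}-\frac{G'}{G(G-1)}$. Writing $\frac{F'}{F(F-1)}=\frac{F'}{F-1}-\frac{F'}{F}=\bigl(\log(1-\tfrac1F)\bigr)'$, one sees that at a pole of $F$ of order $s$ this expression has a zero of order $s-1$; hence at each pole of $f$ the function $V$ vanishes to order at least $\lambda=\min\{m(n-2)-1,(1+k)l(n-2)-1\}$. Moreover $m(r,V)=S(r,f)$ and the poles of $V$ are simple, confined to the zeros of $F$ and $G$ (i.e.\ of $f$ and $L(f)$) and to the $1$-points where the shared multiplicities can disagree, counted by $\overline N_{L}(r,1;F)+\overline N_{L}(r,1;G)$. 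Thus $\lambda\,\overline N(r,\infty;f)\le N(r,0;V)\le N(r,\infty;V)+S(r,f)$ yields the master inequality directly, with no appeal to the second main theorem and no auxiliary $\alpha_i$-points to absorb (at the points $f^m=\alpha_i$ or $L(f)=\alpha_i$ both $F$ and $F-1$ have poles, so $V$ is again regular there). One must also treat the degenerate case $V\equiv0$: integration gives $1-\tfrac1F=A(1-\tfrac1G)$, and since $f^m$ and $L(f)$ share poles, either $\overline N(r,\infty;f)=S(r,f)$ (and the lemma holds trivially) or $A=1$, forcing $F\equiv G$ against the hypothesis. Your proposal, which defers the vanishing case of the auxiliary function to a separate ``more rigid'' analysis, would still need this argument to close the lemma.
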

\begin{proof} Let us define $V=(\frac{F'}{F(F-1)}-\frac{G'}{G(G-1)})$\\
\textbf{Case-1} $V\equiv 0$\\
By integration we get $(1-\frac{1}{F})=A(1-\frac{1}{G})$.
As $f^{m}$ and $L(f)$ share $(\infty,0)$, so if $\overline{N}(r,f)\neq S(r,f)$ then $A=1$, i.e., $F=G$, which is not possible. So $\overline{N}(r,f)=S(r,f).$ Thus the lemma holds.\\
\textbf{Case-2} $V\not\equiv 0$\\
Let $z_{0}$ be a pole of $f$ of order $t$, then it is a pole of $L(f)$ of order $(t+k)l$ and that of $F$ and $G$ are $tm(n-2)$ and $(t+k)l(n-2)$ respectively.\\
Clearly $z_{0}$ is a zero of $(\frac{F'}{F-1}-\frac{F'}{F})$ order at least $tm(n-2)-1$ and zero of $V$ of order atleast $\lambda$, where $\lambda=\min\{m(n-2)-1,(1+k)l(n-2)-1\}$\\
Thus \beas &&\overline{N}(r,\infty;f)\\ &\leq& \frac{1}{\lambda}N(r,0;V)\\
&\leq& \frac{1}{\lambda}N(r,\infty;V)+S(r,f)\\
&\leq& \frac{1}{\lambda}\{\overline{N}_{L}(r,1;F)+\overline{N}_{L}(r,1;G)+\overline{N}(r,0;f)+\overline{N}(r,0;L(f))\}+S(r,f)\\
&\leq& \frac{1}{\lambda}[\mu\{\overline{N}(r,0;f)+\overline{N}(r,\infty;f)+\overline{N}(r,0; L(f))+\overline{N}(r,\infty;f)\}\\
&+& \overline{N}(r,0;f)+\overline{N}(r,0;L(f))]+S(r,f).
\eeas
Thus
\beas \overline{N}(r,\infty;f) &\leq& \frac{\mu+1}{\lambda-2\mu}(\overline{N}(r,0;f)+\overline{N}(r,0;L(f)))+S(r,f).\eeas
\end{proof}

\begin{lem}\label{el.2.1.2} If $H \not\equiv0$ and $F$ and $G$ share $(1,p)$ then
\bea\label{el.2} &&N(r,\infty;H)\\ &\leq& \overline{N}(r,\infty;f)+\overline{N}(r,0;f)+\overline{N}(r,0;L)+\overline{N}(r,b;f^{m})+\overline{N}(r,b;L)\nonumber\\
\nonumber&+&\overline{N}_{L}(r,1;F)+\overline{N}_{L}(r,1;G)+\overline{N}_{0}(r,0;(f^{m})')+\overline{N}_{0}(r,0;L'),\eea
where $\overline{N}_{0}(r,0;(f^{m})')$ denotes the counting function of the zeros of $(f^{m})'$ which are not the zeros of $f(f^{m}-b)$ and $F-1$, similarly $\overline{N}_{0}(r,0;L')$ is defined.
\end{lem}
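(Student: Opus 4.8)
The plan is to read off the poles of $H$ directly from its logarithmic--derivative structure and to localise them by means of the explicit form of $R'$. First I would record the computation
$$R'(z)=\frac{a(n-2)\,z^{n-1}(z-b)^{2}}{n(n-1)(z-\alpha_{1})^{2}(z-\alpha_{2})^{2}},$$
which exhibits $z=0$ and $z=b$ as the only critical points of $R$; moreover $R(0)=0\neq1$ and $R(b)\neq1$, the latter because $P(b)=b^{2}(ab^{n-2}-2)\neq0$ by the standing hypothesis $ab^{n-2}\neq2$, while $\alpha_{1},\alpha_{2}\neq b$. Writing $F'=R'(f^{m})(f^{m})'$ and $G'=R'(L)\,L'$, this pins the zeros of $F'$ to the zeros of $f$, the $b$-points of $f^{m}$ and the zeros of $(f^{m})'$, and similarly for $G'$.

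Next I would observe that each of $U_{F}:=\frac{F''}{F'}-\frac{2F'}{F-1}$ and $U_{G}:=\frac{G''}{G'}-\frac{2G'}{G-1}$ is a sum of logarithmic derivatives and hence has only simple poles; consequently $H=U_{F}-U_{G}$ has only simple poles and $N(r,\infty;H)=\overline{N}(r,\infty;H)$. It therefore suffices to enumerate the candidate poles, which lie among the poles of $F$ and $G$, the zeros of $F'$ and $G'$, and the $1$-points of $F$ and $G$, and to compute the residue of $U_{\Phi}$ at each type: at a zero of $\Phi'$ (with $\Phi\neq1,\infty$) the residue equals the order of the zero, at a $1$-point of multiplicity $s$ it equals $-(s+1)$, and at a pole of order $t$ it equals $t-1$.

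The pivotal cancellation occurs at the $1$-points. Since $F$ and $G$ share $(1,p)$, every common $1$-point of equal multiplicity $s$ contributes the same residue $-(s+1)$ to $U_{F}$ and to $U_{G}$ and so is not a pole of $H$; the surviving $1$-points are exactly those of unequal multiplicity, counted by $\overline{N}_{L}(r,1;F)+\overline{N}_{L}(r,1;G)$. Matching the remaining candidates with terms on the right, a pole of $f$ yields a simple pole of $H$ counted in $\overline{N}(r,\infty;f)$; the zeros of $f$ and of $L$ yield $\overline{N}(r,0;f)+\overline{N}(r,0;L)$; the $b$-points of $f^{m}$ and $L$ yield $\overline{N}(r,b;f^{m})+\overline{N}(r,b;L)$ via the double zero of $R'$ at $b$; and the zeros of $(f^{m})'$ and $L'$ not already listed fall into $\overline{N}_{0}(r,0;(f^{m})')+\overline{N}_{0}(r,0;L')$. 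Adding these simple-pole contributions gives (\ref{el.2}).

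The step I expect to be most delicate is the bookkeeping that avoids double counting and loses no pole. In particular one must check that a simple $\alpha_{1}$- or $\alpha_{2}$-point of $f^{m}$ makes $F$ a simple pole, where the residue $t-1$ vanishes so that $H$ is regular, whereas a multiple $\alpha_{i}$-point is a zero of $(f^{m})'$ legitimately absorbed into $\overline{N}_{0}$; that the $b$-points are excluded from $\overline{N}_{0}$ (being zeros of $f^{m}-b$) and hence counted only once through $\overline{N}(r,b;f^{m})$; and that the equal-multiplicity common $1$-points truly cancel in $H$. Once this case analysis is organised, every pole of $H$ is simple and accounted for exactly once, which is the content of the asserted estimate.
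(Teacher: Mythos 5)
Your proposal is correct and follows essentially the same route as the paper, which merely sketches this argument (noting that poles of $F$ are controlled by poles of $f$ and the $\alpha_i$-points of $f^m$, that simple $\alpha_i$-points are not poles of $H$, and that multiple ones are zeros of $(f^m)'$); your computation of $R'$ and the residue bookkeeping at $1$-points, poles, and zeros of $F'$, $G'$ supply exactly the details the paper declares ``obvious.''
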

\begin{proof} The proof is obvious if we are keeping the following in our mind :\\
$\overline{N}(r,\infty;F)\leq\overline{N}(r,\infty;f)+\overline{N}(r,\alpha_{1};f^{m})+\overline{N}(r,\alpha_{2};f^{m})$,\\
But simple zeros of $f^{m}-\alpha_{i}$ are not poles of $H$ and multiple zeros of  $f^{m}-\alpha_{i}$ are zeros of $(f^{m})'$. Similar explanation for $G$ is also hold.
\end{proof}
\begin{lem}(\cite{1})\label{lemma} Let $$Q(z)=(n-1)^{2}(z^{n}-1)(z^{n-2}-1)-n(n-2)(z^{n-1}-1)^{2}$$
then $$Q(z)=(z-1)^{4}\prod\limits_{i=1}^{2n-6}(z-\beta_{i})$$
where $\beta_{i} \in \mathbb{C}\setminus\{0,1\} (i=1,2,\ldots,2n-6),$ which are distinct.
\end{lem}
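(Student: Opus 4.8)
The plan is to reduce $Q$ to a transparent form before doing anything else. Writing $n(n-2)=(n-1)^2-1$ and using the elementary Hankel-type identity
\[
(z^n-1)(z^{n-2}-1)-(z^{n-1}-1)^2=-z^{n-2}(z-1)^2,
\]
which is checked by direct expansion, I would recast the definition as
\[
Q(z)=(n-1)^2\big[(z^n-1)(z^{n-2}-1)-(z^{n-1}-1)^2\big]+(z^{n-1}-1)^2=(z^{n-1}-1)^2-(n-1)^2z^{n-2}(z-1)^2.
\]
From this form the coarse features are immediate: the top-degree terms give $\deg Q=2n-2$ with leading coefficient $1$, and $Q(0)=1\neq 0$, so $0$ is not a root. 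This single algebraic reduction is what makes everything that follows routine.

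Next I would pin down the behaviour at $z=1$. Substituting $z=1+w$ and expanding both squares through order $w^4$, the $w^2$ and $w^3$ coefficients cancel identically, while the $w^4$ coefficient works out to $\tfrac{1}{12}n(n-1)^2(n-2)$, which is nonzero for $n\geq 3$. Hence $z=1$ is a zero of $Q$ of multiplicity exactly $4$, which accounts for the factor $(z-1)^4$ and leaves a cofactor of degree $2n-6$.

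It then remains only to show that every other zero is simple and different from $0$ and $1$; this is the single real obstacle. Here I would argue by contradiction: suppose $\beta\notin\{0,1\}$ is a multiple zero, so $Q(\beta)=Q'(\beta)=0$. Differentiating the reduced form gives $Q'(z)=2(n-1)z^{n-2}(z^{n-1}-1)-(n-1)^2z^{n-3}(z-1)(nz-n+2)$; clearing the common factor $(n-1)\beta^{n-3}$, the relation $Q'(\beta)=0$ becomes $2\beta(\beta^{n-1}-1)=(n-1)(\beta-1)(n\beta-n+2)$. Combining this with $Q(\beta)=0$, that is $(\beta^{n-1}-1)^2=(n-1)^2\beta^{n-2}(\beta-1)^2$, and eliminating $\beta^{n-1}-1$ yields the two scalar identities $(n\beta-n+2)^2=4\beta^n$ and $2\beta^n-2\beta=(n-1)(\beta-1)(n\beta-n+2)$. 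Substituting the first into the second and setting $u=\beta-1$ collapses everything to $\tfrac{1}{2}n^2u^2=(n^2-n)u^2$; since $u\neq 0$, this forces $n(n-2)=0$, contradicting $n\geq 3$. Therefore $Q$ has no multiple root other than $z=1$, so the remaining $2n-6$ roots $\beta_i$ are distinct and lie in $\mathbb{C}\setminus\{0,1\}$, and the factorization $Q(z)=(z-1)^4\prod_{i=1}^{2n-6}(z-\beta_i)$ follows.
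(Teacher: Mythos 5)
Your proof is correct, but note that the paper itself offers no argument for this lemma at all: it is simply quoted from Alzahary's paper (reference [1]), so there is nothing in the present text to compare against. What you supply is a complete, self-contained and elementary verification. The key move, rewriting $n(n-2)=(n-1)^2-1$ and using $(z^n-1)(z^{n-2}-1)-(z^{n-1}-1)^2=-z^{n-2}(z-1)^2$ to get
\[
Q(z)=(z^{n-1}-1)^2-(n-1)^2z^{n-2}(z-1)^2,
\]
is exactly right and makes the rest mechanical: the degree is $2n-2$ with leading coefficient $(n-1)^2-n(n-2)=1$, $Q(0)=1$, and your Taylor expansion at $z=1$ does give vanishing $w^2$ and $w^3$ coefficients with $w^4$ coefficient $\tfrac{1}{12}n(n-1)^2(n-2)\neq 0$, so the order of vanishing at $1$ is exactly $4$. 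The elimination in the multiple-root argument also checks out: from $Q'(\beta)=0$ one gets $2\beta(\beta^{n-1}-1)=(n-1)(\beta-1)(n\beta-n+2)$, substituting this into $Q(\beta)=0$ and cancelling $(n-1)^2(\beta-1)^2$ yields $(n\beta-n+2)^2=4\beta^n$, and feeding that back with $u=\beta-1$ gives $\tfrac12 n^2u^2=n(n-1)u^2$, hence $n(n-2)=0$, a contradiction for $n\geq 3$. The logic is sound (both derived identities are consequences of the two hypotheses, so their joint inconsistency refutes the hypotheses), and the case $n=3$ degenerates harmlessly to $Q(z)=(z-1)^4$. This is a worthwhile addition, since the reader of the paper would otherwise have to chase the reference to see why the $\beta_i$ are simple and avoid $0$ and $1$ --- facts the main proof genuinely uses.
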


\section {Proof of the theorem}

\begin{proof} [\textbf{Proof of Theorem\ref{thB1} }]
\textbf{Case-1} $H \not\equiv 0$\\
Then clearly $F \not\equiv G$.\\
Clearly $\overline{N}(r,1;F|=1)=\overline{N}(r,1;G|=1)\leq N(r,\infty;H)$\\
Now using the Second Fundamental Theorem and Lemma \ref{el.2.1.2},  we get\\
\bea\label{pe1.1.1} &&(n+1)T(r,f^{m})\\ &\leq& \overline{N}(r,\infty;f)+\overline{N}(r,0;f)+\overline{N}(r,b;f^{m})\nonumber\\
\nonumber &+& \overline{N}(r,1;F)-N_{0}(r,0,(f^{m})')+S(r,f)\\
\nonumber &\leq& 2\{\overline{N}(r,\infty;f)+\overline{N}(r,0;f)+\overline{N}(r,b;f^{m})\}\\
\nonumber &+& \{\overline{N}(r,0;L(f))+\overline{N}(r,b;L(f))\}+\{\overline{N}(r,1;F|\geq2)\\
\nonumber &+& \overline{N}_{L}(r,1;F)+\overline{N}_{L}(r,1;G)+\overline{N}_{0}(r,0;(L(f))')\}+S(r,f).
\eea
\textbf{Subcase-1.1} $p\geq2$\\
Now, \bea\label{pe1.1.2} &&\overline{N}(r,1;F|\geq2)+\overline{N}_{L}(r,1;F)+\overline{N}_{L}(r,1;G)+\overline{N}_{0}(r,0;(L(f))')\\
\nonumber &\leq& \overline{N}(r,1;G|\geq2)+\overline{N}(r,1;G|\geq3)+\overline{N}_{0}(r,0;(L(f))')\\
\nonumber &\leq& N(r,0;L'|L\neq0)+S(r,f)\\
\nonumber &\leq& N(r,\frac{L'}{L})+S(r,f)\\
\nonumber &\leq& \overline{N}(r,0;L)+\overline{N}(r,\infty;f)+S(r,f)\eea
Thus,
\bea\label{pe1.1.3} &&(n+1)T(r,f^{m})\\ &\leq& 2\{\overline{N}(r,\infty;f)+\overline{N}(r,0;f)+\overline{N}(r,b;f^{m})\}\nonumber\\
\nonumber &+& 2\overline{N}(r,0;L)+\overline{N}(r,b;L)+\overline{N}(r,\infty;f)+S(r,f).
\eea
Similar result we get for $L(f)$ as
\bea\label{pe1.1.4} &&(n+1)T(r,L(f))\\ &\leq& 2\{\overline{N}(r,\infty;f)+\overline{N}(r,0;L(f))+\overline{N}(r,b;L(f))\}\nonumber\\
\nonumber &+& 2\overline{N}(r,0;f)+\overline{N}(r,b;f^{m})+\overline{N}(r,\infty;f)+S(r,f).
\eea
Let $T(r)=T(r,f^{m})+T(r,L(f))$ and $S(r)=S(r,f)$\\
By adding inequalities (\ref{pe1.1.3}) and (\ref{pe1.1.4}), we get
\bea\label{pe1.1.5} (n+1)T(r) &\leq& 6\overline{N}(r,\infty;f)+4\{\overline{N}(r,0;f)+\overline{N}(r,0;L(f)\}\\
\nonumber &+& 3\{\overline{N}(r,b;f^{m})+\overline{N}(r,b;L(f))\}+S(r,f).
\eea
\bea\label{pe1.1.6} (n-6)T(r) &\leq& 6\overline{N}(r,\infty;f)+S(r).
\eea

By using Lemma \ref{l1.1.1} and inequality (\ref{pe1.1.6}), we get
\bea\nonumber \label{pe1.1.7} (n-6)T(r) &\leq& 6\frac{\mu+1}{\lambda-2\mu}(\overline{N}(r,0;f)+\overline{N}(r,0;L(f)))+S(r)\\
\nonumber  &\leq& 6\frac{\mu+1}{\lambda-2\mu} T(r)+S(r),\eea
which is a contradiction as $n> 6+6\frac{\mu+1}{\lambda-2\mu}$.\\
\textbf{Subcase-1.2} $p=1$\\
Now, \bea\label{pe1.1.7} &&\overline{N}(r,1;F|\geq2)+\overline{N}_{L}(r,1;F)+\overline{N}_{L}(r,1;G)+\overline{N}_{0}(r,0;L')\\
\nonumber &\leq& \overline{N}(r,1;G|\geq2)+\overline{N}(r,1;F|\geq2)+\overline{N}_{0}(r,0;L')\\
\nonumber &\leq& N(r,0;L'|L\neq0)+\frac{1}{2}N(r,0;(f^{m})'| f^{m}\not=0)+S(r,f)\\
\nonumber &\leq& \overline{N}(r,0;L)+\overline{N}(r,\infty;L)+\frac{1}{2}\{\overline{N}(r,0;f)+\overline{N}(r,\infty;f)\}+S(r,f)\eea
Thus we get from (\ref{pe1.1.1}),
\bea\label{pe1.1.8}&& (n+1)T(r,f^{m})\\ &\leq& \frac{5}{2}\{\overline{N}(r,\infty;f)+\overline{N}(r,0;f)\}+2\overline{N}(r,b;f^{m})\nonumber\\
\nonumber &+& 2\overline{N}(r,0;L(f))+\overline{N}(r,b;L(f))+\overline{N}(r,\infty;f)+S(r,f).
\eea
Similar result we get for $L(f)$ as
\bea\label{pe1.1.9} &&(n+1)T(r,L(f))\\ &\leq& \frac{5}{2}\{\overline{N}(r,\infty;f)+\overline{N}(r,0;L(f))\}+2\overline{N}(r,b;L(f))\nonumber\\
\nonumber &+& 2\overline{N}(r,0;f)+\overline{N}(r,b;f^{m})+\overline{N}(r,\infty;f)+S(r,f).
\eea
Adding (\ref{pe1.1.8}) and (\ref{pe1.1.9}), we get
\bea\label{pe1.1.10} (n+1)T(r) &\leq& 7\overline{N}(r,\infty;f)+\frac{9}{2}\{\overline{N}(r,0;f)+\overline{N}(r,0;L(f))\}\\
\nonumber &+& 3\{\overline{N}(r,b;f^{m})+\overline{N}(r,b;L(f))\}+S(r).
\eea
\bea\label{pe1.1.11} (n-\frac{13}{2})T(r) &\leq& 7\overline{N}(r,\infty;f)+S(r).
\eea
Using Lemma \ref{l1.1.1}, we get
\bea\nonumber \label{pe1.1.12} (n-\frac{13}{2})T(r) &\leq& 7\frac{\mu+1}{\lambda-2\mu}(\overline{N}(r,0;f)+\overline{N}(r,0;L(f)))+S(r)\\
\nonumber  &\leq& 7\frac{\mu+1}{\lambda-2\mu} T(r)+S(r),\eea
which is a contradiction as $n > \frac{13}{2}+7\frac{\mu+1}{\lambda-2\mu}$.\\
\textbf{Subcase-1.3} $p=0$\\
Now using the Second Fundamental Theorem and Lemma \ref{el.2.1.2}, we get\\
\bea\label{pe1.1.12} &&(n+1)T(r)\\ &\leq& \overline{N}(r,\infty;f^{m})+\overline{N}(r,\infty;L(f))+\overline{N}(r,0;f^{m})+\overline{N}(r,0;L(f))\nonumber\\
\nonumber &+& \overline{N}(r,b;f^{m})+\overline{N}(r,b;L(f))+\overline{N}(r,1;F)+\overline{N}(r,1;G)\\
\nonumber &-&N_{0}(r,0;(f^{m})')-N_{0}(r,0;L(f)')+S(r,f)\\
\nonumber&\leq& 3\overline{N}(r,\infty;f)+2\overline{N}(r,0;f)+2\overline{N}(r,0;L(f))\\
\nonumber &+& 2\overline{N}(r,b;f^{m})+2\overline{N}(r,b;L(f))+\overline{N}(r,1;F)+\overline{N}(r,1;G)\\
\nonumber &-&\overline{N}(r,1;F|=1)+\overline{N}_{L}(r,1;F)+\overline{N}_{L}(r,1;G)+S(r,f)
\eea
Again,
$$\overline{N}(r,1;F)+\overline{N}(r,1;G)-\overline{N}(r,1;F|=1)\leq \overline{N}_{L}(r,1;F)+N(r,1;G)$$
i.e.,
\beas&&\overline{N}(r,1;F)+\overline{N}(r,1;G)-\overline{N}(r,1;F|=1)\\&\leq& \frac{1}{2}(\overline{N}_{L}(r,1;F)+\overline{N}_{L}(r,1;G)+N(r,1;G)+N(r,1;F))\eeas
So, with the help of Lemma \ref{l.n.1} and  Lemma \ref{l1.1.1} (\ref{pe1.1.12}) becomes  \\
\bea\label{pe1.1.13}&& (n+1)T(r)\\&\leq& 3\overline{N}(r,\infty;f)+2\overline{N}(r,0;f)+2\overline{N}(r,0;L(f))\nonumber\\
\nonumber &+& 2\overline{N}(r,b;f^{m})+2\overline{N}(r,b;L(f))+\frac{3}{2}(\overline{N}_{L}(r,1;F)+\overline{N}_{L}(r,1;G))\\
\nonumber &+& \frac{1}{2}(N(r,1;F)+N(r,1;G))+S(r,f)\eea
That is,
\beas &&(n-6)T(r) \\&\leq& 6\overline{N}(r,\infty;f)+3(\overline{N}_{L}(r,1;F)+\overline{N}_{L}(r,1;G))+S(r)\nonumber\\
&\leq& 6\overline{N}(r,\infty;f)+3 \mu (\overline{N}(r,0; L(f))+\overline{N}(r,0;f)+2\overline{N}(r,\infty;f))+S(r)
\eeas
Thus
\beas
(n-6-3\mu)T(r) &\leq& 6\frac{(\mu+1)^{2}}{\lambda-2\mu}(\overline{N}(r,0;f)+\overline{N}(r,0;L(f)))+S(r)\\
 &\leq& 6\frac{(\mu+1)^{2}}{\lambda-2\mu}T(r)+S(r),
\eeas
which is a contradiction as $n> 6+3\mu+6\frac{(\mu+1)^{2}}{\lambda-2\mu}$.\\
\textbf{Case-2} $H\equiv 0$\\
In this case $F$ and $G$ share $(1,\infty)$.\par
Now by integration we have
\bea\label{pe1.1} F=\frac{AG+B}{CG+D},\eea
where $A,B,C,D$ are constant satisfying $AD-BC\neq 0 $.\par
Thus by Mokhon'ko's Lemma (\cite{7}) \bea\label{pe1.2} T(r,f^{m})=T(r,L(f))+S(r,f)\eea
Clearly from equation (\ref{pe1.1}) when $n\geq3$ we get $\overline{N}(r,f)=S(r,f)$ if $C\neq0$, otherwise when $C=0$, $f^{m}$ and $L(f)$ share $(\infty,\infty)$.\par
As $AD-BC\neq0$, so $A=C=0$ never occur. Thus we consider the following cases:\\
\textbf{Subcase-2.1} $AC\neq0$
In this case \bea F-\frac{A}{C}=\frac{BC-AD}{C(CG+D)}.\eea
So, $$\overline{N}(r,\frac{A}{C};F)=\overline{N}(r,G).$$
Now using the Second Fundamental Theorem and (\ref{pe1.2}), we get
\beas T(r,F) &\leq& \overline{N}(r,\infty;F)+\overline{N}(r,0;F)+\overline{N}(r,\frac{A}{C};F)+S(r,F)\\
&\leq& \overline{N}(r,\infty;f)+\overline{N}(r,\alpha_{1};f^{m})+\overline{N}(r,\alpha_{2};f^{m})+\overline{N}(r,0;f)\\
&+& \overline{N}(r,\infty;L(f))+\overline{N}(r,\alpha_{1};L(f))+\overline{N}(r,\alpha_{2};L(f))+S(r,f)\\
&\leq& \frac{5}{n}T(r,F)+S(r,F),\eeas
which is a contradiction as $n> 6$.\\
\textbf{Subcase-2.2} $AC=0$\\
\textbf{Subsubcase-2.2.1} $A=0$ and $C\neq0$\\
In this case $B\neq0$ and
$$F=\frac{1}{\gamma G+\delta},$$
where $\gamma=\frac{C}{B}$ and $\delta=\frac{D}{B}$.\\
If $F$ has no $1$-point,  then using the Second Fundamental Theorem and (\ref{pe1.2}), we get
\beas &&T(r,F)\\ &\leq& \overline{N}(r,\infty;F)+\overline{N}(r,0;F)+\overline{N}(r,1;F)+S(r,F)\nonumber\\
&\leq& \overline{N}(r,\infty;f)+\overline{N}(r,\alpha_{1};f^{m})+\overline{N}(r,\alpha_{2};f^{m})+\overline{N}(r,0;f)+S(r,f)\\
&\leq& \frac{3}{n}T(r,F)+S(r,F),\eeas
which is a contradiction as $n> 6$.\par
Thus $\gamma+\delta=1$ and $\gamma\neq0$.\par
So, $$F=\frac{1}{\gamma G+1-\gamma},$$
From above we get $\overline{N}(r,0;G+\frac{1-\gamma}{\gamma})=\overline{N}(r,F)$.\\
If $\gamma\neq1$, then using the Second Fundamental Theorem and (\ref{pe1.2}), we get
\beas &&T(r,G)\\ &\leq& \overline{N}(r,\infty;G)+\overline{N}(r,0;G)+\overline{N}(r,0;G+\frac{1-\gamma}{\gamma})+S(r,G)\nonumber\\
&\leq& \overline{N}(r,\infty;L(f))+\overline{N}(r,\alpha_{1};L(f))+\overline{N}(r,\alpha_{2};L(f))+\overline{N}(r,0;L(f))\\
&+& \overline{N}(r,\infty;f)+\overline{N}(r,\alpha_{1};f^{m})+\overline{N}(r,\alpha_{2};f^{m})+S(r,f)\\
&\leq& \frac{5}{n}T(r,F)+S(r,F),\eeas
which is a contradiction as $n> 6$.\par

Thus $\gamma=1$ and $FG\equiv 1$ which gives
$$f^{mn}(L(f))^{n}=\frac{n^{2}(n-1)^{2}}{a^{2}}(f^{m}-\alpha_{1})(f^{m}-\alpha_{2})(L(f)-\alpha_{1})(L(f)-\alpha_{2})$$
As $n>6$ from the above equation it is clear that $f$ has no pole.\\
Let $z_{0}$ be a $\alpha_{1i}$ point of $f$ of order $s$, where $(\alpha_{1i})^{m}=\alpha_{1}$, then it can't be a pole of $L(f)$ as $f$ has no pole, so $z_{0}$ is a zero of $L(f)$ of order $q$ satisfying $n\leq nq =s$.\par
Clearly from above $\overline{N}(f,\alpha_{1i};f) \leq \frac{1}{n}{N}(f,\alpha_{1i};f)$\par
Similarly, we get $\overline{N}(f,\alpha_{2j};f) \leq \frac{1}{n}{N}(f,\alpha_{2j};f)$\par
Also $\overline{N}(r,\infty;f)=S(r,f)$

Thus by the Second Fundamental Theorem we get
\beas &&(2m-1)T(r,f)\\&\leq& \overline{N}(r,\infty;f)+\sum\limits_{i=1}^{m}\overline{N}(r,\alpha_{1i};f)+\sum\limits_{j=1}^{m}\overline{N}(r,\alpha_{2j};f)+S(r,f)\nonumber\\
&\leq& \frac{2m}{n}T(r,f)+S(r,f)\eeas
which is not possible as $n > 6$.\\
\textbf{Subsubcase-2.2.2} $A\neq0$ and $C=0$\par
In this case $D\neq0$ and
$$F=\lambda G+\mu,$$
where $\lambda=\frac{A}{C}$ and $\mu=\frac{B}{D}$.\par
If $F$ has no $1$ point then similarly as above we get a contradiction.\par
Thus $\lambda+\mu=1$ with $\lambda\neq0$.\par
Clearly $\overline{N}(r,0;G+\frac{1-\lambda}{\lambda})=\overline{N}(r,0;F)$\\
If $\lambda \neq1$, then using the Second Fundamental Theorem and (\ref{pe1.2}), we get
\beas &&T(r,G)\\ &\leq& \overline{N}(r,\infty;G)+\overline{N}(r,0;G)+\overline{N}(r,0;G+\frac{1-\lambda}{\lambda})+S(r,G)\nonumber\\
&\leq& \overline{N}(r,\infty;L(f))+\overline{N}(r,\alpha_{1};L(f))+\overline{N}(r,\alpha_{2};L(f))+\overline{N}(r,0;L(f))\\
&+& \overline{N}(r,0;f)+S(r,f)\\
&\leq& \frac{5}{n}T(r,G)+S(r,G),\eeas
which is a contradiction as $n > 6$.\par
Thus $\lambda=1$ and $F\equiv G$.
So $f^{m}$ and $L(f)$ share $(\infty,\infty)$ and
 \beas && n(n-1)f^{2m}(L(f))^{2}(f^{m(n-2)}-(L(f))^{n-2})-2n(n-2)bf^{m}L(f)\\&&(f^{m(n-1)}-(L(f))^{n-1})+ (n-1)(n-2)b^{2}(f^{mn}-(L(f))^{n})=0\eeas
Substituting $h=\frac{L(f)}{f^{m}}$ we get
\bea\label{pe1.5} &&n(n-1)h^{2}f^{2m}(h^{n-2}-1)-2n(n-2)bhf^{m}(h^{n-1}-1)\\&&+(n-1)(n-2)b^{2}(h^{n}-1)=0.\nonumber \eea
If $h$ is non constant then by lemma \ref{lemma}, we get
\beas &&\{n(n-1)hf^{m}(h^{n-2}-1)-n(n-2)b(h^{n-1}-1)\}^{2}\\&=&-n(n-2)b^{2}(h-1)^{4}\prod\limits_{i=1}^{2n-6}(h-\beta_{i})\eeas
Then by the Second Fundamental Theorem we get
\beas&& (2n-6)T(r,h)\\ &\leq& \overline{N}(r,\infty;h)+\overline{N}(r,0;h)+\sum\limits_{i=1}^{2n-6}\overline{N}(r,0;h-\beta_{i})+S(r,h)\nonumber \\
&\leq& \overline{N}(r,\infty;h)+\overline{N}(r,0;h)+\frac{1}{2}\sum\limits_{i=1}^{2n-6}N(r,0;h-\beta_{i})+S(r,h)\\
&\leq& (n-1) T(r,h)+S(r,h), \eeas
which is a contradiction as $n>6$.\par
Thus $h$ is constant. Hence as $f$ is non-constant and $b\neq0$, we get from equation (\ref{pe1.5}), that $(h^{n-2}-1)=0$, $(h^{n-1}-1)=0$ and $(h^{n}-1)=0$. That is $h=1$. Consequently $f^{m}=L(f).$
\end{proof}


\end{document}